\newtheorem{remark}{Remark}
\newtheorem{theorem}{Theorem}
\newtheorem{lemma}{Lemma}
\newcommand{\rr}{\mathbb{R}}
\newcommand{\rd}{\mathrm{d}}
\newcommand{\rds}{\mathrm{d}s}
\newcommand{\rdv}{\mathrm{d}v}
\newcommand{\rdt}{\mathrm{d}t}
\newcommand{\rdx}{\mathrm{d}x}
\renewcommand{\L}{\mathcal{L}}
\DeclareMathOperator{\supp}{supp}
\renewcommand{\ss}{\mathbb{S}}
\newcommand{\pp}{\mathbb{P}}
\newcommand{\eps}{\varepsilon}
\newcommand{\<}{\left\langle}
\renewcommand{\>}{\right\rangle}
\DeclareMathOperator*{\esup}{ess\,sup}
\title{Kinetic chemotaxis tumbling kernel determined from macroscopic quantities}
\author{ \href{https://orcid.org/0000-0002-5090-9218}{\includegraphics[scale=0.06]{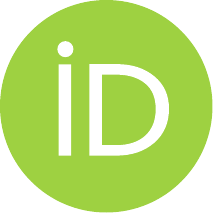}\hspace{1mm}Kathrin Hellmuth}\\
	Department of Mathematics\\
	University of W\"urzburg\\
	97074 Würzburg, Germany\\
	\texttt{kathrin.hellmuth@mathematik.uni-wuerzburg.de}\\
	\And 
	\href{https://orcid.org/0000-0003-2033-8204}{\includegraphics[scale=0.06]{orcid.pdf}\hspace{1mm}Christian Klingenberg} \\
	Department of Mathematics\\
	University of W\"urzburg\\
	97074 Würzburg, Germany\\
	\texttt{klingen@mathematik.uni-wuerzburg.de}\\
	\And 
	\href{https://orcid.org/0000-0001-9210-8948}{\includegraphics[scale=0.06]{orcid.pdf}\hspace{1mm}Qin Li} \\
	Department of Mathematics\\
	University of Wisconsin-Madison\\
	Madison, WI, 53705, USA\\
	\texttt{ qinli@math.wisc.edu}\\
	\And 
	\hspace{1mm}Min Tang \\
	School of Mathematics\\
	Shanghai Jiaotong University\\
	Shanghai, 200240, China\\
	\texttt{ tangmin@sjtu.edu.cn}
}
\begin{document}
	\maketitle
	\begin{abstract}
		Chemotaxis is the physical phenomenon that bacteria adjust their motions according to chemical stimulus. A classical model for this phenomenon is a kinetic equation that describes the velocity jump process whose tumbling/transition kernel uniquely determines the effect of chemical stimulus on bacteria. The model has been shown to be an accurate model that matches with bacteria motion qualitatively. For a quantitative modeling, biophysicists and practitioners are also highly interested in determining the explicit value of the tumbling kernel. Due to the experimental limitations, measurements are typically macroscopic in nature. Do macroscopic quantities contain enough information to recover microscopic behavior? In this paper, we give a positive answer. We show that when given a special design of initial data, the population density, one specific macroscopic quantity as a function of time, contains sufficient information to recover the tumbling kernel and its associated damping coefficient. Moreover, we can read off the chemotaxis tumbling kernel using the values of population density directly from this specific experimental design. This theoretical result using kinetic theory sheds light on how practitioners may conduct experiments in laboratories.
	\end{abstract}
	\keywords{kinetic chemotaxis equation; velocity jump process; singular decomposition;  unique reconstruction;  tumbling kernel}
	% \tableofcontents
	%%%%%%%%%%%%%%%%%%%%%%%%%%%%%%%%%%%%%%%%%%
	\section{Introduction}
	
	Bacteria and microorganisms can move autonomously and react to external stimuli, such as food or danger,  which is an important factor in  evolution. If the movement is affected by a chemical stimulus, this phenomenon is called chemotaxis. Chemotaxis phenomena are widely observed among motile organisms and particularly well studied for  Escherichia coli (E.coli) cells. When the bacterial movement consists of two alternating phases in which they either run along a straight line or re-orient by changing the direction of travel (tumbling), their movement is called a velocity jump process. 
	
	The kinetic chemotaxis model describes this behaviour statistically \cite{AltChemotaxis,OthmerHillen2Chemo, chalub2004kinetic,ErbanOthmerKineticChemotaxis}:
	\begin{alignat}{4}
		&\frac{\partial }{\partial t} f(x,t,v) + v\cdot \nabla_x f(x,t,v) = \L(f){(x,t,v)} - \sigma{(x,v)} f{(x,t,v)}\,\label{eq:chemotaxis}\\%,\quad (x,t,v)\in\rr^3\times V\times[0,T]\nonumber \\
		&f(x,t=0,v) = \phi(x,v)\label{eq:initialdata}%\in C_{c,+}^{\infty} %L^{1}\cap L^{\infty}
	\end{alignat}
	The equation describes the evolution of the  population density of bacteria $f(x,t,v)$ on the phase space $(x,v)\in \rr^d\times V$, $d\in \{2,3\}$ during the time interval $t\in[0,T]$ with initial condition $\phi$. Experimental data suggests that bacteria move at a constant speed, and we set $V:=\mathbb{S}^{d-1}$ to reflect this fact. The left side of equation \eqref{eq:chemotaxis} describes the motion of the bacteria moving along a straight line in direction $v$ from location $x$. The two terms $\L(f)$ and $\sigma f$ on the right hand side of the equation describe the velocity jump process arising from the reorientation by tumbling. In particular,
	\begin{equation}\label{eqn:def_L}
		\L(f)(x,t,v) = \int_V K(x,v,v') f(x,t,v')\rd{v'}\,,
	\end{equation}
	collects the particles that change their velocity from $v'$ to $v$, and
	\begin{equation}\label{eqn:def_sigma}
		\sigma {(x,v)}= \int_V K(x,v',v) \rd{v'}\,,
	\end{equation}
	describes the ratio of particles changing velocity from $v$ to others, and thus disappearing in a statistical sense from the phase point $(x,t,v)$. As such, these two terms are called gain and loss terms respectively.
	Let us mention that in certain applications, bacteria  generate a self-attracting/propulsing stimulus whose effect is then included in the tumbling kernel $ K(x,v,v')$ through the ``concentration'' term, see chemotaxis modeling~\cite{Bournaveas_BlowUpKineticChemotaxis,BournaveasPoissonModel,chalub2004kinetic, chalub2006PreventOvercrowdingKineticChemotaxis}. In this paper we consider this a next order concern, and set $K$ to be a fixed function in space such that our model coincides with the linear Boltzmann equation. We discuss more details in the Conclusion \ref{sec:conclusions}.
	
	The tumbling kernel $ K(x,v,v')$ encodes the probability of bacteria changing velocity directions by tumbling. The value of this kernel is affected by the chemoattractant density, usually denoted by $c(x)$. To make the discussion more clear and concise, we only consider the case when the chemoattractant concentration $c(x)$ is given. This indicated that the chemical stimulus cannot be consumed or produced by the bacteria and $K$, $\sigma$ are independent of time.
	
	Different types of bacteria take different values of $K$ and $\sigma$ and are differently affected by the  concentrations of the chemical stimulus (chemoattractant). 
	Since the tumbling kernel $K$ and the damping factor $\sigma$ uniquely determine the law of the bacterial motion in \eqref{eq:chemotaxis}, biologists and practitioners are highly interested in identifying them for future motion predictions, see important applications in bioreactors~\cite{Singh2008ApplicationChemotaxisSoilRemediation},  the spread and prevention of diseases~\cite{JOSENHANS2002605} and biofilm formation~\cite{biofilm}.
	
	To identify $K$ and $\sigma$ in practice, experiments are conducted to measure observables of bacterial behaviour so to indirectly infer the tumbling coefficient. The practical difficulty is that measuring the time dynamical data of velocity dependent bacteria density $f(x,t,v)$  is not always feasible. One would have to trace the trajectory of each single bacterium for a long time, which is technically difficult when there are a lot of bacteria \cite{measureVelocityDependentf}. Instead, the time evolution of the macroscopic quantities such as the density 
	\[
	\rho(x,t):=\<f\> = \int_V f\, \rdv
	\]
	is much easier to obtain by counting bacteria on a time series of photos, see also other more sophisticated methods~\cite{measureRho}. This poses an interesting mathematical question: can the macroscopic measurements on bacteria density, as a function of time, uniquely determine the values of $K$ and $\sigma$? 
	
	At the first sight, the answer should be negative. Indeed, the to-be-inferred parameters are functions posed on the microscopic level and have $v$ dependence, while the measurements are purely on the macroscopic level with $v$ dependence eliminated. This mismatch leads to some mathematical difficulty, to overcome which, a mechanism that triggers information on the microscopic level is needed. We introduce this mechanism by examining the time dependence, and playing with the singularity in the initial data. It turns out that if one places a special set of singularity in the initial data and introduces the corresponding singularity to the measuring operator that is concentrated at the compatible time and location, we can prove that the coefficients $K$ and $\sigma$ are uniquely reconstructable. Furthermore, the values can be directly read off from the measurements.

	The mathematical machinery that allows us to explicitly express the reconstruction is a technique termed singular decomposition. It is a technique that is specifically designed for studying inverse problems from kinetic theory, and has been traditionally used to investigate stationary radiative or photon transport equation, see~ \cite{ Bal_SingDecomp_2009,Ball_StabilitySingDecomp_2010, Ball_SingDecomp_2018,Ball_SingDecomop_2012,Choulli_1996,LiSun_SingularDecomp, Ren_ReviewNumericsTransportBasedImaging, StefanovZhongSingDecomp} for instance. The most classical use of the singular decomposition allows the data to be $v$ dependent, but the measuring location is typically set only to be on the boundary. Difficulties are introduced when only velocity independent measurements are available~\cite{Bal_angularAveragedMeasurementsInstability, LiWangChen_BayesianInstabilityAngularAveragedMeasurements, Ren_ReviewNumericsTransportBasedImaging, ZhaoZhong_angularlyAveragedIllposed}. In this setting, one no longer has the luxury of the freedom from the velocity dependence. In our project, however, we use measurements in time, containing information from the interior of the domain. The main task in this project is to investigate if these freedom could counter the difficulties induced by the loss of velocity information. It turns out from our study that the availability of short-time data is also crucial. In both the reconstruction of $\sigma$ and $K$, we heavily rely on the design of initial/measuring time and locations that precisely reflects the parameters to be reconstructed.
	
	Using measurements to identify bacteria motion is of high practical interests to biologists. However, even though chemotaxis and inverse problems are both very active areas of research, largely hindered by the lack of rigorous mathematical tools, very little is known theoretically if the experiments can truly reflect bacteria chemotaxis behavior. In practice, the most popular parabolic Keller-Segel model is on the macroscopic level, and it is common for practitioners to assume a heuristically obtained parametrized form for the model coefficients and estimate only these parameters by experimental data \cite{giometto2015generalized, ChemotaxisRecoverDGamma, RecoverChemotacticSensivity,ChemotacticSensitivityVariesALot}. Numerically, one can study the identifiability  of the  chemoattractant sensitivity for the macroscopic models, as in \cite{FisterMcCarthy_UniqueChemotacticSensitivity_2008, EggerPietschmannSchlottbohm_UniqueChemotacticSensitivity_2015}, where it was shown how the reconstruction from the regularized problem converges to the true solution as the noise vanishes.
	
	As the techniques such as kinetic theory and singular decomposition ripen, we are convinced that these applications can be re-examined afresh, with a more rigorous viewpoint. It is our aim to prove the unique reconstructability of the kinetic tumbling kernel $K$ and loss coefficient $\sigma$ using only the macroscopic measurements. Hopefully these arguments provide foundations to the algorithms that execute the reconstruction in reality.

	% \new{
		% Why this is good research: 
		% As 
		% kinetic inverse problems active area of research (see above + some more  +our last paper), (conditional)well-posedness for inverse problems important
		% kinetic chemotaxis active area of research (new models), practically important (bioreactors?)
		% Paper Li Nunez matrix completion: link theoretical results here to practical use
		% }
	
	The article is structured as follows: In section \ref{sec:ProblemSetup}, we provide the problem setup. Sections \ref{sec:RecSigma} and \ref{sec:RecK} build the heart of this article and contain the proofs of the unique reconstructability of $\sigma$ and $K$ respectively. For both cases, the singular decomposition technique will be used for carefully prepare initial data and measurement test function. The article is concluded by Section \ref{sec:conclusions}.
	\section{Problem setup}\label{sec:ProblemSetup}
	%\kh{More about the experimental setup? }
	The setup of the lab experiment is connected to the mathematical formulation in this section.
	
	In the lab experiment, an environment with a fixed chemical concentration is prepared, so $K$ and $\sigma$ can be thought as constants in time. Initially, bacteria are placed in this environment in a controlled manner and the evolution  of the macroscopic bacteria density along time is captured locally in time and space by measurements.
	
	Mathematically,~\eqref{eq:chemotaxis} is considered the model equation for bacteria motion. Though  the plate that bacteria are supported in is finite and thus provides certain boundary conditions, for  simplicity, we assume an infinitely big domain where  boundary poses no effect to the dynamics. We should note, however, that the inversion mechanism that to be employed in this paper makes use of compactly supported initial condition and data-measurement at small time, so potential boundary conditions are not expected affect the reconstruction. Hence we expect the inversion mechanism to be extendable to the case where the interior of a finite domain problem is considered. 
	
	The initial condition can be controlled, and we prescribes it as $\phi(x,v)$. Let us mention that    singularity in $v$ domain can be realized through suitable experimental apparatuses, for instance, the bacteria can be confined in a thin pipe and released   into the environment, see for example~\cite{ThinLayerSwarmingMotility} where E.coli bacteria were examined, and ~\cite{SyntheticMicroswimmers}  where the authors manipulated synthetic microswimmers through micro-confinement. The algae Euglena gracilis can be controlled by polarized light, which was exploited by the authors of~\cite{ControlValgae} and the references therein.
	
	From now on, we present the analysis for dimension  $d=3$, but  methods and results can be extended to deal with $d=2$ dimensions  as well.
	
	% We view~\eqref{eq:chemotaxis} the model equation that describes the dynamics of bacteria placed in an environment with a given chemoattractant concentration $c(x)$. 
	% We assume the plate is large enough and the boundary condition is of no effect. The initial condition can be controlled, and we prescribes it as $\phi(x,v)$, and along time, we take measurement of the bacteria density. 
	For every given initial data $\phi(x,v)$,
	% \begin{equation}\label{eq:initialdata}
		% f(x,t=0,v)= \phi(x,v)\,,
		% \end{equation} 
	we denote the solution to the PDE~\eqref{eq:chemotaxis} equipped with initial data~\eqref{eq:initialdata} by $f_\phi$, and the macroscopic quantity is
	\begin{equation*}
		\rho_{\phi}(x,t):=\<f_\phi\>=\int_V f_\phi\,\rd v\,.
	\end{equation*}
	This builds the following map:
	\[
	\Lambda_K:\phi\quad\to\quad \rho_\phi(t,x)\,.
	\]
	% \begin{remark}
		% It might seem contradicting to assume that the initial data $\phi(x,v)$ is known in phase space, whereas we can only measure macroscopic quantities in the experiment. In practice, velocity dependent measurements can only be obtained by long time observation of the bacteria. For the preparation of the initial data, such long time observations can be done before starting the experiment. \ql{I recommend to leave the remark out.}\kh{discuss on Thursday; maybe put it in discussion as well?}
		% \end{remark}
	
	To be more compatible with the real practice, for each detector, we let $\psi(x,t)\in C^{\infty}_c$  present its profile, then the detector's reading would be $\rho_\phi$ tested on this test function, the output of the following measurement operator:
	\begin{equation}\label{eqn:measurement}
		M_{\psi}\left(f_\phi\right)=M_{\psi}\left(\rho_\phi\right)= \int_0^T\int_{\mathbb{R}^3}\rho_{\phi} \, \psi(x,t)\,\rd x\,\rd t \,.
	\end{equation}
	Since the measurement operator only acts on the density $\rho_\phi$, we abuse the notation and let $M_{\psi}\left(f_\phi\right)=M_{\psi}\left(\rho_\phi\right)$ when $\rho_\phi=\<f_\phi\>$.
	
	% \kh{maybe add remark, that everything works for 'fixed time' measurements $M_{\psi}\left(\rho_\phi\right)= \int_{\mathbb{R}^3}\rho_{\phi}(x,t_m) \, \psi(x)\,\rd x$ as well}
	
	It is immediate that for every fixed $\psi$, the measurement is the one instance of reading of $\Lambda_K[\phi]$:
	\[
	M_{\psi}\left(f_{\phi}\right) = \int_{0}^T\int_{\rr^3}\psi(t,x)\Lambda_K[\phi](t,x)\,\rd{t}\,\rd{x}\,.
	\]
	
	We claim that $\Lambda_K$ encodes all the needed information to uniquely recover $\sigma$ and $K$, and the reconstruction process depends on the special design of $\phi$ and $\psi$, namely:
	
	\begin{theorem}
		Under mild conditions, one can uniquely reconstruct $\sigma$ and $K$ using the map $\Lambda_K$. Moreover, by properly choose $(\phi,\psi)$, the reconstruction can be explicit using the reading of $M_{\psi}(f_\phi)$.
	\end{theorem}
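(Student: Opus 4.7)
The plan is to use the singular decomposition technique mentioned in the introduction, adapted to \emph{both} inputs of the experiment: we take initial data $\phi$ concentrated at a single phase-space point $(x_0,v_0)$ and test function $\psi$ concentrated at a single spacetime point $(x^\ast,t^\ast)$, then read off the microscopic coefficients by tuning these two points. The underlying mechanism is that the transport operator $\partial_t+v\cdot\nabla_x$ propagates singularities along characteristics while $\L$ smooths in $v$. Consequently, Duhamel's formula yields
\[
f_\phi = f_0 + f_1 + f_{\geq 2},
\]
where $f_0$ is the unscattered ballistic part, $f_1$ collects the contribution of exactly one tumbling event, and $f_{\geq 2}$ gathers two or more. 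Each successive summand is strictly less singular because $\L$ integrates out the velocity variable.

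First I would recover $\sigma$. For $\phi\to\delta(x-x_0)\delta(v-v_0)$ the ballistic piece is explicit,
\[
f_0(x,t,v) = \delta(x-x_0-v_0 t)\,\delta(v-v_0)\,\exp\!\Bigl(-\int_0^t \sigma(x_0+v_0 s,v_0)\,\rd s\Bigr),
\]
so $\<f_0\>$ is a one-dimensional Dirac measure supported on the line $\{x=x_0+v_0 t\}$ weighted by the Beer--Lambert attenuation, whereas $\<f_1+f_{\geq 2}\>$ is at most an integrable surface-type measure. Choosing $\psi\to\delta(x-x^\ast)\delta(t-t^\ast)$ with $x^\ast=x_0+v_0 t^\ast$ lets $M_\psi(f_\phi)$ select the ballistic weight to leading order; applying $-\partial_{t^\ast}\log$ then isolates $\sigma(x^\ast,v_0)$. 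Letting $(x_0,v_0,t^\ast)$ range over $\rr^3\times\ss^2\times(0,T)$ recovers $\sigma$ pointwise on $\rr^3\times\ss^2$.

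With $\sigma$ in hand, I would next attack $K$. Duhamel's formula gives
\[
f_1(x,t,v) = \int_0^t e^{-\int_s^t\sigma(x-(t-r)v,v)\rd r}\,K(x-(t-s)v,v,v_0)\,e^{-\int_0^s\sigma(x_0+v_0\tau,v_0)\rd\tau}\,\delta\bigl(x-(t-s)v-x_0-v_0 s\bigr)\,\rd s,
\]
whose $v$-average is a distribution supported on the spacetime cone of first-scattered trajectories emanating from $(x_0,v_0)$. The key geometric observation is that at a generic cone point $(x^{\ast\ast},t^{\ast\ast})$ \emph{off} the ballistic line, the delta constraint forces a unique tumbling instant $s^\ast$, a unique scattering location $x_0+v_0 s^\ast$, and a unique outgoing direction $v^{\ast\ast}=(x^{\ast\ast}-x_0-v_0 s^\ast)/(t^{\ast\ast}-s^\ast)\in\ss^2$. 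Concentrating $\psi$ there, $M_\psi(f_\phi)$ returns the scalar
\[
K(x_0+v_0 s^\ast,\,v^{\ast\ast},\,v_0)\cdot(\text{known $\sigma$-attenuation})\cdot(\text{geometric Jacobian}),
\]
and dividing out the known factors reads off $K$ at the prescribed triple. As the parameters $(x_0,v_0,s^\ast,v^{\ast\ast})$ sweep the free variables, $(x,v,v')$ covers all of $\rr^3\times\ss^2\times\ss^2$, yielding $K$ globally.

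The main obstacle is the rigorous separation of these singular orders. Since $\phi$ and $\psi$ must be admissible (say smooth and compactly supported) rather than literal Diracs, I would work with mollified families $\phi_\eps,\psi_\eps$ and track how $M_{\psi_\eps}(f_{\phi_\eps})$ scales as $\eps\to 0$. The leading asymptotic must come from $f_0$ in the $\sigma$-step and from $f_1$ in the $K$-step, while the contribution of $f_{\geq 2}$ must be of strictly lower order. Controlling $f_{\geq 2}$ uniformly in the unknown $K$ is the delicate point, but should follow from a standard Neumann iteration in Duhamel form, using boundedness of $K$ and finiteness of the time horizon, provided the ``mild conditions'' are taken to include, for instance, $K\in L^\infty$ with $K\geq 0$ and $T$ finite. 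A secondary issue is ensuring that the scattering-cone constraint is non-degenerate at each sampled point; this reduces to choosing $(x^{\ast\ast},t^{\ast\ast})$ transversal to the ballistic line, which is always possible in $\rr^3\times(0,T)$.
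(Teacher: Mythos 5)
Your overall architecture matches the paper's: a Duhamel decomposition $f=f_0+f_1+f_{\geq 2}$, mollified initial data concentrated at $(x_0,v_0)$ and a mollified detector, an on-axis measurement with $-\partial_{t^\ast}\log$ to read off $\sigma$ via Beer--Lambert, and an off-axis measurement selecting a unique tumbling event to read off $K$. The $\sigma$ step is essentially the paper's argument (the paper additionally needs the smallness condition $C_K|V|T<1$, or equivalently measurement at small enough times, to make the tail of the Neumann series vanish uniformly as the mollifiers concentrate; ``$T$ finite'' alone is not what is proved).

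The genuine gap is in your treatment of $f_{\geq 2}$ in the $K$ step. Your claim that ``each successive summand is strictly less singular because $\L$ integrates out the velocity variable,'' so that $f_1$ dominates $f_{\geq 2}$ at a fixed off-axis point $(x^{\ast\ast},t^{\ast\ast})$, is the classical singular-decomposition hierarchy for \emph{angularly resolved} measurements. Here the measurement is the velocity average $\rho=\langle f\rangle$, and averaging in $v$ destroys exactly the angular singularity that makes $f_1$ dominant: at a fixed point off the ballistic line, $\langle f_1\rangle$ and $\langle f_{\geq 2}\rangle$ are both finite, generically comparable quantities (their ratio is controlled only by $C_K|V|t^{\ast\ast}$), so ``dividing out known factors'' leaves an $O(1)$ contamination depending on the unknown $K$ globally. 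A Neumann iteration with $K\in L^\infty$ and finite horizon gives boundedness, not smallness. The paper's resolution, which your proposal is missing, is to couple the measurement time to the spatial mollification scale: $t_m=\eps^{\alpha}$ with $\alpha\in(3/4,1)$, together with the geometric normalization $C_{\hat s,\hat v}=\hat s^{\,2}(1-\langle v_i,\hat v\rangle)$ of the detector. Then the double-scattering contribution is bounded by $C\,\eps^{-3}t_m^2\hat s^{\,2}\sim\eps^{4\alpha-3}\to 0$ (needing $\alpha>3/4$), while the single-scattering term still converges to $K(x_i,\hat v,v_i)$ because the off-axis displacement $\|x_m-x_i-v_it_m\|\sim\eps^{\alpha}$ remains large compared with the support radius $\eps$ of $\phi_x$ (needing $\alpha<1$). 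This short-time asymptotic also makes the attenuation factors tend to $1$, so the paper does not need the reconstructed $\sigma$ as an input to the $K$ step, whereas your fixed-time version would. Without some replacement for this scaling argument, the separation of orders on which your $K$ reconstruction rests does not hold.
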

	
	Throughout the paper we assume $\sigma$ and $K$ are time-independent, and the admissible sets are:
	\begin{align*}
		&\mathcal{A}_{\sigma} = \{\sigma \in C_{+}(\rr^3\times V)\mid \|\sigma\|_{\infty}\leq C_{\sigma}\}\\
		&\mathcal{A}_{K} = \{K \in C_{+}(\rr^3\times W)\mid \|K\|_{\infty}\leq C_{K}\}
	\end{align*}
	where we set $W:= \{(v,v')\in V\times V\mid v\neq v'\}$. The reconstruction procedure is performed on these sets.
	% \begin{remark}
		% \new{This restriction of the velocity domain is necessary, because  $K(x,t,v,v)$ would correspond to bacteria stopping to tumble and choosing the same velocity $v$ afterwards as they had before the tumble. In our model, we could not distinguish this from not tumbling bacteria that just ran the straight line in direction $v$. \kh{maybe note that there are other models where tumbling takes time: Min Tang talk on Minisymposium; which models?}.
			
			% For a better readability, we will still use the full $V$ as an integration domain $\int_V K(x,t,v,v')\, \rd v'$ when actually meaning $\int_{V\setminus \{v\}} K(x,t,v,v')\, \rd v'$.  If $K(x,t,v,v)$ is finite these integrals automatically coincide. }
		% \end{remark} 
	
	% We claim that $\Lambda_K$ uniquely reconstructs $K$ and $\sigma$. In particular, by carefully choosing $(\phi,\psi)$ pairs, we can find explicit expression for $K(x,t,v,v')$ and $\sigma(x,t,v)$.
	
	\section{Reconstructing $\sigma$}\label{sec:RecSigma}
	We dedicate this section for reconstructing $\sigma(x,v)$, and showing the following theorem.
	
	\begin{theorem}[Unique reconstruction of $\sigma$]\label{thm:unique_sigma}
		Let $\sigma\in\mathcal{A}_{\sigma}$ and $K\in\mathcal{A}_{K}$. The map $\Lambda_K$ uniquely determines $\sigma(x,v)$. In particular, for any $(x,v)$, by a proper choice of $\phi$ and $\psi$, one can explicitly express $\sigma(x,v)$ in terms of $M_\psi(\rho_\phi)$, with $\rho_\phi$ being the density associated with $f_\phi$ that solves~\eqref{eq:chemotaxis}.
	\end{theorem}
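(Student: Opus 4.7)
The plan is to use a singular decomposition, separating $f_\phi$ into a \emph{ballistic} part that carries a sharp singularity propagating along a characteristic ray and a strictly more regular \emph{scattered} part. Writing $f_\phi = f_\phi^{(0)} + f_\phi^{(r)}$, where $f_\phi^{(0)}$ solves the gain-free problem
\begin{equation*}
    \partial_t f^{(0)} + v\cdot\nabla_x f^{(0)} + \sigma(x,v)\, f^{(0)} = 0,\qquad f^{(0)}(x,0,v) = \phi(x,v),
\end{equation*}
the method of characteristics immediately yields
\begin{equation*}
    f_\phi^{(0)}(x,t,v) = \phi(x-tv, v)\,\exp\!\left(-\int_0^t \sigma(x-sv, v)\,\rds\right).
\end{equation*}
The remainder $f_\phi^{(r)} := f_\phi - f_\phi^{(0)}$ satisfies the same transport/damping equation with source $\L(f_\phi)$, and by Duhamel its representation involves at least one integration in $v'$ against the bounded kernel $K$. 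Consequently, any velocity singularity of $\phi$ is averaged out by $\L$, and $\rho_\phi^{(r)}:=\<f_\phi^{(r)}\>$ is strictly smoother than $\rho_\phi^{(0)}:=\<f_\phi^{(0)}\>$.

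To reconstruct $\sigma$ at a target point $(x_\ast, v_\ast)$, I would fix any $t_\ast \in (0,T]$, set $x_0 := x_\ast - t_\ast v_\ast$, and choose a family $\phi_\eps \in C^\infty_c(\rr^3\times V)$ approximating $\delta_{x_0}\otimes\delta_{v_\ast}$ as $\eps \to 0$, together with $\psi_\eps \in C_c^\infty$ approximating $\delta(x-x_\ast)\,\delta(t-t_\ast)$. Inserting $\phi_\eps$ into the explicit ballistic formula and integrating in $v$ gives, in the distributional limit,
\begin{equation*}
    \rho_{\phi_\eps}^{(0)}(x,t) \;\xrightarrow[\eps\to 0]{}\; \delta(x - x_0 - t v_\ast)\,\exp\!\left(-\int_0^t \sigma(x_0 + sv_\ast, v_\ast)\,\rds\right),
\end{equation*}
so pairing with $\psi_\eps$ extracts precisely the ballistic factor
\begin{equation*}
    \exp\!\left(-\int_0^{t_\ast} \sigma(x_0 + sv_\ast, v_\ast)\,\rds\right).
\end{equation*}
The key claim is that $\rho^{(r)}_{\phi_\eps}$ stays $\eps$-uniformly controlled in a norm strong enough that pairing against the concentrating $\psi_\eps$ yields a vanishing contribution, so only this ballistic exponential survives in $M_{\psi_\eps}(\rho_{\phi_\eps})$.

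Writing $\tau(t_\ast) := -\log \lim_{\eps\to 0} M_{\psi_\eps}(\rho_{\phi_\eps}) = \int_0^{t_\ast}\sigma(x_0+sv_\ast, v_\ast)\,\rds$ and varying $t_\ast$ along the ray, the fundamental theorem of calculus gives the explicit reconstruction
\begin{equation*}
    \sigma(x_\ast, v_\ast) \;=\; \tau'(t_\ast)\;=\;\lim_{h\to 0}\frac{\tau(t_\ast+h)-\tau(t_\ast)}{h},
\end{equation*}
which in practice amounts to comparing the readings of two detectors placed at nearby points $(x_\ast, t_\ast)$ and $(x_\ast + h v_\ast, t_\ast + h)$ on the same characteristic. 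Because $(x_\ast, v_\ast)$ was arbitrary, this both delivers the explicit formula claimed in the theorem and proves that $\Lambda_K$ uniquely determines $\sigma$ on $\rr^3\times V$.

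The hard part will be the uniform-in-$\eps$ control of $\rho^{(r)}_{\phi_\eps}$ against the concentrating test function $\psi_\eps$. One must quantify how the $v'$-integration in $\L$, combined with damping by $\sigma$, smooths the concentration of $\phi_\eps$ in both $x$ and $v$, so that although $\rho^{(0)}_{\phi_\eps}$ behaves like a Dirac mass along the ray, $\rho^{(r)}_{\phi_\eps}$ stays bounded with an $\eps$-scaling that is harmless when paired with $\psi_\eps$. A Dyson--Neumann expansion of $f_\phi$ in powers of $\L$, together with the change of variables $(s,v')\mapsto x_0+sv'$ that converts velocity integrals on $V=\ss^2$ into spatial ones along characteristics, should deliver these estimates; keeping the constants explicit is what upgrades mere injectivity of $\Lambda_K$ into the pointwise read-off formula for $\sigma(x_\ast, v_\ast)$ from $M_\psi(\rho_\phi)$.
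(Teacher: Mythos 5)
Your overall strategy coincides with the paper's: the same ballistic/scattered splitting (your $f^{(0)}$, $f^{(r)}$ are the paper's $f_{\phi,0}$, $f_{\phi,\geq 1}$), concentrated initial data along a characteristic, a compatible concentrated measurement, and the read-off of $\sigma$ by differentiating the logarithm of the limiting measurement in the measurement time, exactly as in \eqref{eq:recSigma}. However, the step you label ``the key claim'' --- that the scattered part contributes nothing to the measurement --- is precisely the substance of the theorem, and it is asserted rather than proved; moreover the heuristic you offer for it would not survive scrutiny. First, your normalization is inconsistent: if $\psi_\eps$ approximates $\delta(x-x_\ast)\delta(t-t_\ast)$ (unit mass in $x$ and $t$) while $\rho^{(0)}_{\phi_\eps}$ also concentrates to a Dirac mass on the same point of the ray, the pairing of two concentrating families is not the ``ballistic factor'' --- it diverges like $\eps^{-3}$ when the spatial scales are comparable, and in general depends on the relative rates and the order of limits. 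The paper avoids this by taking $\psi$ with amplitude $O(1)$ and spatial width $\eps$ matched to $\phi$ (so $\int\psi_x((x-x_m)/\eps)\,\rd x=\eps^3$, not $1$), taking $\eta,\delta\to0$ before $\eps\to0$, and dividing by $C_{\phi_x\psi_x}$ at the end. Second, ``$\rho^{(r)}$ is smoother and stays bounded, hence its pairing vanishes'' is false as stated: after one tumble $f_1\lesssim C_K t\,\eps^{-3}$ near the ray (see \eqref{eq:estf_n}), so the remainder is not uniformly bounded in $\eps$, and even a genuinely bounded remainder paired against a test function normalized as an approximate delta would contribute its pointwise value, not zero.

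What actually makes the scattered contribution vanish in the paper is a two-part argument you would still need to supply. For each finite scattering order $n$, the matched $\eps$-scaling of $\phi_x$ and $\psi_x$ turns the measurement into an integral of $\phi_x\bigl(\tilde x+\tfrac{1}{\eps}\sum_{j}s_j(v_i-v_j)\bigr)\psi_x(\tilde x)$ over the tumble times and velocities, and as $\eps\to0$ the integrand collapses onto the measure-zero set $\{\sum_j s_j(v_i-v_j)=0\}$, forcing $M_\psi(\rho_n)\to0$ --- a geometric support argument, not a smoothing argument. For the Neumann tail $f_{\geq N+1}$ (your Dyson series remainder), one needs a quantitative bound, obtained in Lemma~\ref{lem:estMfgeqN} via Gronwall, of the form $M(\rho_{\geq N+1})\leq (C_K|V|T)^{N+1}e^{C_K|V|T}$, which is only useful under the smallness condition $C_K|V|T<1$ (equivalently, measuring at sufficiently small times; cf.\ Lemma~\ref{lem:fgeq1} and Remark~\ref{rem:recSigmaSmallt}). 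Your proposal contains neither the measure-zero mechanism nor any smallness condition on $T$ or $t_m$, so as written the central lemma is a genuine gap, even though the surrounding architecture (decomposition, concentration, log-derivative reconstruction) matches the paper.
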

	\begin{remark}\label{rem:recSigmaSmallt}
		We note the statement of the result can be extended to treat time-dependent $\sigma$ as well. To recover $\sigma(x,t,v)$ at a particular time $t$-horizon, the data $\phi$ needs to be provided, and the measurements need to be collected close enough to the $t$. As the following proof shows, if $\phi$ is provided at $t_0>t-C$ with $C=(|V|C_K)^{-1}$ for $C_K$ being the bound from the admissible set, the time-dependence of $\sigma$ can be reconstructed as well.
		
		% might seem restricting, in practice it is not. This is due to the fact that the time dependence of $\sigma$ is originated in the time dependence of the chemoattractant, which we assume to be controllable  in experiments. Hence, running a new experiment where the  chemoattractant concentration is shifted back in time  yields the values of $\sigma$ for larger times.% higher times $T>C$ can be achieved by 'starting the experiment at a later time'. Experimentally, this means letting the chemoattractant concentration evolve until time $T-\frac{C}{2}$ before putting the bacteria in the medium. After time $t=\frac{C}{2}<C$, the considered value can be measured by the explained techniques. It corresponds to the value of $\sigma$
		
		%\new{Furthermore, all subsequent derivations also hold true in case of point measurements in time i.e. $\psi(x,t)=\tilde{\psi}(x)\delta_{t_m}$ and $M_{\psi}(\rho) = \int_{\rr} \rho(x,t_m)\tilde{\psi}(x)\, \rd x$.}\ql{why do we need this...? feel a bit abrupt...?}
	\end{remark}
	
	The main technique used in the proof is termed the singular decomposition developed in~\cite{Choulli_1996}, and then extensively used in other following works~\cite{  Bal_SingDecomp_2009,Ball_StabilitySingDecomp_2010,Ball_SingDecomp_2018,Ball_SingDecomop_2012,LiSun_SingularDecomp, StefanovZhongSingDecomp}. The idea is to design a special set of sources $\phi$ that introduces singularity to the solution, in short time, the singularity is mostly preserved along the propagation trajectory. By properly choosing the compatible $\psi$, the singular information can be picked up by the measurements. Mathematically, to identify the singular component of the solution, we decompose $f_\phi$ into parts that exhibit different regularity. In particular, we decompose $f$ into
	\[
	f_\phi=f_{\phi,0}+f_{\phi,\geq 1}
	\]
	where $f_{\phi,0}$ and $f_{\phi,\geq1}$ solve the following equations respectively:
	\begin{equation}\label{eqn:f0}
		\left\{\begin{array}{rl}\partial_t f_{\phi,0} + v\cdot \nabla f_{\phi,0} &= -\sigma f_{\phi,0},\\
			f_{\phi,0}(x,t=0,v)&= \phi(x,v),\end{array}  \right.
	\end{equation}
	and
	\begin{equation}\label{eqn:fgeq1}
		\left\{ \begin{array}{rl}\partial_t f_{\phi,\geq1} + v\cdot \nabla f_{\phi,\geq1} &= -\sigma f_{\phi,\geq1} + \L(f_{\phi,0}+f_{\phi,\geq1}),\\
			f_{\phi,\geq1}(x,t=0,v) &= 0.\end{array}\right. 
	\end{equation}
	As a direct consequence,
	\[
	M_{\psi}(\rho_\phi)=M_{\psi}(\rho_{\phi,0})+M_\psi(\rho_{\phi,\geq1})\,,
	\]
	where we denote $\rho_{\phi,i}:=\int f_{\phi,i}\rd{v}\,,\quad\text{for}\quad i\in \{0,\geq 1\}$.
	
	Intuitively, the division of $f_\phi$ into the two components is to separate the particles that behave differently. In particular, $f_{\phi,0}$ denotes the number of bacteria on the phase space that tumble out of the state they were in. So in some sense, the distribution function ``decays" along the trajectory with rate $\sigma$. $f_{\phi,\geq 1}$, on the other hand, collects the distribution of all remaining bacteria. The right hand side of  equation \eqref{eqn:fgeq1} has three terms, representing the bacteria tumbling out of the state (thereby decaying in the distribution sense by $\sigma$), tumbling in from the source $f_{\phi,0}$ and tumbling in by $f_{\phi,\geq1}$. Since $f_{\phi,0}$ contains $\sigma$ information solely, one would expect to reconstruct $\sigma$ if $f_{\phi,0}$ information can be identified from the full $f_\phi$.
	
	The core of analysis lies in designing a special set of $\phi$ and $\psi$ that have compatible singularities to each other so that
	\begin{equation}\label{eqn:estimate_goal}
		M_\psi(\rho_\phi) = M_\psi(\rho_{\phi,0})\,,\quad\text{and}\quad M_\psi(\rho_{\phi,\geq1})=0\,,
	\end{equation}
	so we have access to the value of $M_\psi(\rho_{\phi,0})\approx M_\psi(\rho_\phi)$ that will be further used to reconstruct $\sigma$. When the context is clear below, we drop the $\phi$ dependence in $\rho$ to have a concise notation.
	
	% The goal above means that we need to design experiment so that the measurements $M_\psi(\rho_\phi)$ only contains information from $f_0$, the component of the solution that does not tumble up to time $t$. Intuitively, $f_0$ denotes the part of $f(x,t,v)$ which did not tumble until time $t$. It only exhibited loss due to bacteria tumbling for the first time into another velocity. The above equations state that our objective is to reconstruct $\sigma$ solely from $f_0$, to be precise from the measurement $M_\psi(\rho_0)$. We want to can access this part of the measurement by choosing a suitable initial condition $\phi$ and measurement test function $\psi$. 
	%That means, that the measurement $M_\psi(\rho_\phi)$ only contains information from $\rho_0$, i.e. from the part $f_0$ of $f(x,t,v)$ which did not tumble until time $t$, compare \eqref{eqn:f0}.

	We now list the conditions for $\phi$ and $\psi$ so to have~\eqref{eqn:estimate_goal} holds true. Let $\phi_x, \psi_x\in C_c^{\infty}(\rr^3)$, $\phi_v\in C_c^{\infty}(\rr^2)$ and $\psi_t\in C_c^{\infty}(\rr)$ be non negative functions  compactly supported in the unit ball $B^n(0,1)\subset\rr^n$
	\begin{align}\label{eq:phipsiproperties}
		&\supp(\phi_x), \supp(\psi_x)\subset B^3(0,1), \quad \supp(\phi_v)\subset B^2(0,1), \quad \supp(\psi_t)\subset B^1(0,1), \nonumber\\
		&0\leq \phi_x, \psi_x,\phi_v,\psi_t\leq 1\quad \text{with } \quad \phi_x(0)= \psi_x(0)=\phi_v(0)=\psi_t(0)=1\quad\text{and}\\
		&1=\int_{\rr^3}\phi_x(x)\, \rd x = \int_{\rr^3}\psi_x(x)\, \rd x = \int_{\rr^2}\phi_v(y)\, \rd y = \int_{\rr}\psi_t(t)\, \rd t.\nonumber
	\end{align}
	% These auxiliary functions will be used to induce the singularity in the initial data and measurement test function. The location of the singularities determines the experimental geometry. We choose it as follows: for fixed initial location $x_i\in \rr^3$ and initial velocity $v_i\in V$, let $t_m\in (0,T)$ with $T<(C_K|V|)^{-1}$ be the measurement time and 
	% \[
	% x_m:= x_m(t_m)= x_i+v_it_m
	% \]
	% the corresponding measurement location. This choice is motivated by the fact that we aim to reconstruct $\sigma$ from the measurement of $f_0$. I
	% In the limit case of singular initial data, all bacteria start at location $x_i$ in direction $v_i$. Those bacteria that did not tumble will thus be exactly at location $x_m$ at time $t_m$. %  Introduce geometry $x_i,v_i,t_m,x_m=x_i+v_it_m$.

	Let $(x_i,v_i)\in \rr^3\times V$ be the initial location and velocity of the bacteria concentration, and $(x_m,t_m)\in\rr^3\times (0,T)$ be the measurement location and time, we now set the initial data $\phi$ and measurement test function $\psi$ to be 
	\begin{alignat}{3}\label{eqn:def_phi_psi}
		\phi(x,v) &= %\begin{cases}
		\frac{1}{\eps^3\delta^2}\phi_x\left(\frac{x-x_i}{\eps}\right)\phi_v\left(\frac{\pp_{v_i}(v)}{\delta}\right)j(v;v_i),% &\langle v,v_i\rangle >0\\0, &\langle v,v_i\rangle <0,\end{cases}
	&\quad\in C^{\infty}_c, %\quad \text{ for small $\delta$}
	\nonumber\\
	\psi(x,t)&= \frac{1}{\eta}\psi_x\left(\frac{x-x_m}{\eps}\right)\psi_t\left(\frac{t-t_m}{\eta}\right)&\quad\in C^{\infty}_c
\end{alignat}
for small scaling parameters $\eps,\delta, \eta>0$. Furthermore, $\pp_{v_i}:\ss\setminus\{-v_i\} \to \rr^2$ denotes the stereographic projection on the direction of $-v_i$, with its absolute Jacobi determinant given by  $j(v;v_i) := 1/((1+\langle v,v_i\rangle)^2|\langle v,v_i\rangle|)$. Accordingly, we also define a quantity that will be used in the later discussion:
\begin{equation}\label{eqn:C_phipsi}
	C_{\phi,\psi}=\int_{\rr^3} \phi_x\left({x}\right)\psi_x\left({x}\right)\,\rd{x}\,.
\end{equation}
For the measurement $M_\psi(\rho_{\phi,0})$ to be non-trivial, the two pairs $(x_i,v_i)$ and $(x_m,t_m)$ should be compatible to each other. Indeed, we require
\[
x_m:= x_m(t_m)= x_i+v_it_m\,,
\]
so that the measurement location at $t_m$ indeed receives the data transported from $x_i$ in the direction of $v_i$.

The proof of the theorem is based on the following two lemmas.
\begin{lemma}\label{lem:f1}
	Let $\phi$ and $\psi$ be defined as in~\eqref{eqn:def_phi_psi}. Let $\sigma$ and $K$ be selected from the admissible sets. The solution to~\eqref{eqn:f0} gives
	\begin{align*}
		\lim_{\eps\to0}\lim_{\eta,\delta\to0}M_\psi(\rho_{0})= e^{-\int_0^{t_m} \sigma(x_i+v_is,v_i)\,  \rd{s}}  C_{\phi_x\psi_x}\,.
	\end{align*}
\end{lemma}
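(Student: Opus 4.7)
The plan is to solve the damped pure-transport equation~\eqref{eqn:f0} explicitly by the method of characteristics and then send the three scaling parameters $\eta,\delta,\eps$ to zero in sequence, exploiting the fact that each mollifier in~\eqref{eqn:def_phi_psi} is an approximate identity on its respective variable. The characteristic formula yields
\[
f_{\phi,0}(x,t,v)=\phi(x-vt,v)\,\exp\!\left(-\int_0^{t}\sigma(x-v(t-s),v)\,\rds\right),
\]
and integrating in $v$ before inserting into~\eqref{eqn:measurement} writes $M_\psi(\rho_0)$ as a single iterated integral in $(t,x,v)$ whose integrand is the product of the two mollifiers and the attenuation factor.

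The limits are then peeled off one at a time. Sending $\eta\to 0$ collapses the $t$-integration to evaluation at $t=t_m$, because $\frac{1}{\eta}\psi_t\!\left(\frac{t-t_m}{\eta}\right)$ is an approximate identity on $\rr$ by~\eqref{eq:phipsiproperties}. Sending $\delta\to 0$ concentrates the velocity at $v_i$: after the change of variables $u=\pp_{v_i}(v)$ on $V\setminus\{-v_i\}$, the factor $j(v;v_i)$ is precisely the Jacobian that converts the spherical area element $\rdv$ to planar Lebesgue measure $\rd u$, so $\frac{1}{\delta^2}\phi_v(u/\delta)\,\rd u$ becomes an approximate identity at $u=0$, which corresponds to $v=v_i$ on $V$. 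After these two steps the surviving integrand depends on $x$ only through the product $\frac{1}{\eps^3}\phi_x\!\left(\frac{x-v_it_m-x_i}{\eps}\right)\psi_x\!\left(\frac{x-x_m}{\eps}\right)$ and the exponential attenuation factor evaluated at $(x,v_i,t_m)$.

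The compatibility condition $x_m=x_i+v_it_m$ is exactly what makes these two $x$-bumps co-centered. The substitution $y=(x-x_m)/\eps$ rewrites the spatial integral as
\[
\int_{\rr^3}\phi_x(y)\psi_x(y)\,\exp\!\left(-\int_0^{t_m}\sigma(x_m+\eps y-v_i(t_m-s),v_i)\,\rds\right)\rd y,
\]
and sending $\eps\to 0$ freezes the attenuation factor at $x=x_m$. A final change of variable $s\mapsto t_m-s$ together with $x_m-v_is=x_i+v_i(t_m-s)$ rewrites the factor as $\exp(-\int_0^{t_m}\sigma(x_i+v_is,v_i)\,\rds)$, and the remaining integral is precisely $C_{\phi_x\psi_x}$.

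The main technical burden is justifying the passage to the limit at each of the three stages. Since $\sigma\in\mathcal{A}_\sigma$ is uniformly bounded the attenuation factor lies in $[0,1]$, the rescaled mollifiers have unit mass on shrinking compact supports, and continuity of $\sigma$ lets dominated convergence do the work at each step. The only spot requiring any care is the stereographic Jacobian identification that makes $\phi_v$ behave like a genuine approximate identity on the sphere, but that reduces to an elementary change-of-variables check using the given formula for $j(v;v_i)$.
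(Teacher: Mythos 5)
Your proposal is correct and follows essentially the same route as the paper: the explicit characteristic solution of~\eqref{eqn:f0}, insertion into $M_\psi$, the changes of variables that turn the rescaled $\psi_t$, $\phi_v$ (with the stereographic Jacobian $j(v;v_i)$) and $\phi_x\psi_x$ into approximate identities, dominated convergence justified by continuity and boundedness of $\sigma$, and the compatibility $x_m=x_i+v_it_m$ co-centering the spatial bumps before the final reparametrization $s\mapsto t_m-s$. The only difference — taking the $\eta$ and $\delta$ limits sequentially rather than jointly — is immaterial here since the dominated-convergence argument gives the joint limit directly.
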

Similarly,
\begin{lemma}\label{lem:fgeq1}
	Let $\phi$ and $\psi$ be defined as in~\eqref{eqn:def_phi_psi}, with $t_m<T$ that satisfies $C_K|V|T<1$. Let $\sigma$ and $K$ be selected from the admissible sets. The solution to~\eqref{eqn:fgeq1} gives
	\begin{displaymath}
		\lim_{\eps\to 0}\lim_{\delta,\eta\to 0}M_\psi(\rho_{\geq 1}) =0\,.
	\end{displaymath}
\end{lemma}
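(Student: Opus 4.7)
The plan is to write $f_{\phi,\geq 1}$ via Duhamel, derive a pointwise integral inequality for $\rho_{\geq 1}$, and expand it as a Neumann series that converges thanks to $C_K|V|T<1$. Each term of the series will then be controlled, after the inner limits $\delta,\eta\to 0$, by exploiting the ballistic concentration of $\rho_0$ in an $\eps$-tube around the single trajectory $\{x_i+v_i s:s\in[0,t_m]\}$. Writing $f_{\phi,\geq 1}$ along characteristics and using $\L(f)(x,t,v)\leq C_K\int_V f\,\rdv$ together with $\sigma\geq 0$, one obtains $\rho_{\geq 1}\leq A(\rho_0+\rho_{\geq 1})$ where
\begin{equation*}
A\rho(x,t):=C_K\int_V\int_0^t \rho\bigl(x-v(t-s),s\bigr)\,\rds\,\rdv\,.
\end{equation*}
Since $\|A\|_{L^\infty\to L^\infty}\leq C_K|V|T<1$, iteration yields $\rho_{\geq 1}\leq\sum_{n\geq 1}A^n\rho_0$ and hence $M_\psi(\rho_{\geq 1})\leq\sum_{n\geq 1}M_\psi(A^n\rho_0)$.

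The central step is the uniform bound $\|A\rho_0\|_\infty\leq C/\eps^2$, which holds after $\delta\to 0$ collapses the velocity marginal of $\phi$ onto $v_i$ and gives $\rho_0(y,s)\leq\frac{1}{\eps^3}\phi_x\bigl(\frac{y-x_i-v_i s}{\eps}\bigr)$. Via the polar change of variables $y=x-v(t-s)$, $|x-y|=t-s$, the operator admits the backward light-cone representation
\begin{equation*}
A\rho_0(x,t)\;\leq\;\frac{C_K}{\eps^3}\int_{B(x,t)}\frac{1}{|x-y|^2}\,\phi_x\!\left(\frac{y-x_i-v_i(t-|x-y|)}{\eps}\right)\rd y\,.
\end{equation*}
The support of $\phi_x$ restricts $y$ to a tube of radius $\eps$ around the ray $\{x_i+v_i\tau:\tau\in[0,t]\}$, of volume $O(\eps^2 t)$. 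The worst case occurs at $(x,t)=(x_m,t_m)$, where the cone apex lies on the tube; parametrizing the tube by $(\alpha,\beta)$ with $\alpha$ along $v_i$ and $\beta\perp v_i$, a direct spherical-coordinates calculation shows $\int_{\text{tube}}|x_m-y|^{-2}\,\rd y\leq C\eps$, so $A\rho_0(x_m,t_m)\leq C/\eps^2$. Contraction of $A$ then propagates this to $\|A^n\rho_0\|_\infty\leq C(C_K|V|T)^{n-1}/\eps^2$ for every $n\geq 1$.

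Combining this with the normalization $\int\tfrac{1}{\eta}\psi_x\bigl(\tfrac{x-x_m}{\eps}\bigr)\psi_t\bigl(\tfrac{t-t_m}{\eta}\bigr)\,\rd x\,\rd t=\eps^3$ gives $M_\psi(A^n\rho_0)\leq\|A^n\rho_0\|_\infty\cdot\eps^3\leq C(C_K|V|T)^{n-1}\eps$. Summing the geometric series yields $M_\psi(\rho_{\geq 1})\leq C\eps/(1-C_K|V|T)$ after the inner limits $\delta,\eta\to 0$, and sending $\eps\to 0$ proves the lemma.

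The main obstacle is the ballistic smoothing bound $\|A\rho_0\|_\infty\leq C/\eps^2$: the naive estimate $\|A\rho_0\|_\infty\leq C_K|V|T\|\rho_0\|_\infty\leq C/\eps^3$ gives no improvement over $\rho_0$ itself, and one genuinely needs to combine the tube concentration of $\rho_0$ with the integrability of the backward-cone measure $|x-y|^{-2}\rd y$ to gain the extra factor of $\eps$. The fact that the measurement point lies exactly on the trajectory makes the cone tangent to the tube at its apex, so the Coulomb-type singularity $|x-y|^{-2}$ there must be handled by an explicit cutoff at $|x-y|\sim\eps$; all other steps are the standard Neumann series, convergent thanks to the hypothesis $C_K|V|T<1$.
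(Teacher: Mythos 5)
Your argument is correct, but it takes a genuinely different route from the paper's. The paper truncates the Duhamel expansion at a finite order $N$: for each $n\le N$ it passes to the limit inside the $n$-fold collision integral and observes that the surviving integrand carries the factor $\mathds{1}_0\bigl(\sum_{j}s_j(v_i-v_j)\bigr)$, supported on a null set, so each $M_\psi(\rho_n)$ vanishes exactly in the limit; the tail $M_\psi(\rho_{\geq N+1})$ is then bounded by $(C_K|V|T)^{N+1}e^{C_K|V|T}$ via the Gronwall estimate of Lemma~\ref{lem:estMfgeqN} and made small by choosing $N$ large, which is where $C_K|V|T<1$ enters. You instead keep the full Neumann series and prove a single quantitative smoothing estimate, $\|A\rho_0\|_\infty\le C/\eps^2$, gaining one power of $\eps$ over the trivial $C/\eps^3$ by combining the $\eps$-tube support of $\rho_0$ with the integrable $|x-y|^{-2}$ cone measure; the contraction $\|A\|_{L^\infty\to L^\infty}\le C_K|V|T<1$ then propagates the gain to every order at once and yields the explicit rate $M_\psi(\rho_{\geq1})=O(\eps)$ uniformly in $\delta,\eta$. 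Your tube computation is in effect a quantitative version of the paper's measure-zero observation at the first collision; it buys a convergence rate and a one-shot treatment of all orders including the tail, whereas the paper's version avoids any geometric measure estimate but must argue order by order and control the remainder separately. Two points worth tightening: (i) the assertion that $(x_m,t_m)$ is the worst case should be replaced by a bound valid at every $(x,t)$ --- a clean way is to note that for fixed $v$ the set of $s$ with $x-v(t-s)$ in the tube has measure at most $2\eps/|v-v_i|$ and $\int_{\ss^2}|v-v_i|^{-1}\,\rdv=4\pi<\infty$, which gives $\|A\rho_0\|_\infty\le C/\eps^2$ uniformly (and also uniformly in small $\delta$, so the inner limit needs no separate treatment); (ii) terminating the iteration $\rho_{\geq1}\le\sum_{n\ge1} A^n\rho_0$ requires the a priori boundedness of $\rho_{\geq1}$ for fixed $\eps,\delta$, which again follows from Lemma~\ref{lem:estMfgeqN}.
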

Theorem~\ref{thm:unique_sigma} is a quick corollary of these two lemmas.
\begin{proof}[Proof of Theorem~\ref{thm:unique_sigma}]
	Under the conditions listed in Lemma~\ref{lem:f1} and Lemma~\ref{lem:fgeq1}, we have:
	\[
	\lim_{\eps\to0}\lim_{\eta,\delta\to0}M_\psi(\rho_\phi) = \lim_{\eps\to0}\lim_{\eta,\delta\to0}M_\psi(\rho_{0})=C_{\phi_x\psi_x}e^{-\int_0^{t_m} \sigma(x_i+v_is,v_i)\,  \rd s}\,.
	\]
	Then we have the immediate conclusion that:
	\begin{equation}\label{eq:recSigma}
		\sigma(x_m,v_i) = -\partial_{t_m}\ln\left(\frac{1 }{ C_{\phi_x\psi_x}}\lim_{\eps\to 0}\lim_{\delta, \eta\to 0} (M_{\psi}(\rho_\phi))\right).
	\end{equation}
\end{proof}
\begin{remark}
	The result above provides an explicit reconstruction of $\sigma$  in equation~\eqref{eq:recSigma}, however,  a stability bound is not given.  As the formula includes a time derivative outside the limit taking of the small parameters, additional difficulty may be  introduced in the stability analysis. Moreover, we would like to point out that neither  the proof of the theorem nor both lemmas show a specific dimension dependence which indicates that the result holds true for dimenstion $d=2$ as well. This is in contrast to classical kinetic inverse problem where an albedo operator is used to infer photon scattering coefficient. In these problems the data is typically confined on the boundary, i.e.  one dimension of freedom is lost, which explains the sensitivity of these results to the dimensionality of the problem. In our setup, data is taken in the interior and we have the additional freedom to adjust time. This appears to be sufficient to obtain independence of the dimension $d$, so the result easily extends.% $\int_0^{t_m} \sigma(x_i+v_is,s,v_i)$ is stable even w.r.t. pointwise disturbance of the data\ql{we may need to be a bit more careful in formulating this remark...}\kh{ how to go over to reconstruction of $\sigma$, what do the limits do?}
\end{remark}

We now give proofs for the two lemmas above. It amounts to detailed calculations.

\begin{proof}[Proof of Lemma~\ref{lem:f1}]
	According to the equation for $f_0$ in~\eqref{eqn:f0}, we can explicitly compute $f_0$, along the trajectory of the bacteria propagation:
	\begin{equation}\label{eq:f0}
		f_0(x,t,v) = e^{-\int_0^t\sigma(x-vs,v)\,  \rd s} \phi(x-vt,v)\,.
	\end{equation}
	Inserting this into the definition of the measurement~\eqref{eqn:measurement}, we have
	\begin{align*}
		M_{\psi}(\rho_0) &= \int_0^T\int_{\rr^3}\int_V f_0(x,t,v)\,\rd v\,\psi(x,v)\,\rd x\,\rd t \\
		&= \int_0^T\int_{\rr^3}\int_V e^{-\int_0^t\sigma(x-vs,v)\,  \rd s} \phi(x-vt,v)\,\rd v\,\psi(x,t)\,\rd x\,\rd t\,.
	\end{align*}
	Plug in the form of $\phi$ and $\psi$ in~\eqref{eqn:def_phi_psi}, we have:
	\begin{align*}
		&M_{\psi}(\rho_0)\\
		=& \frac{1}{\eps^3\delta^2\eta} \int_0^T\int_{\rr^3}\int_V  e^{-\int_0^t \sigma(x-vs,v)\,  \rd s}\phi_x\left(\frac{x-vt-x_i}{\eps}\right)\phi_v\left(\frac{\pp_{v_i}(v)}{\delta}\right)j(v)\,\rd v\,\\
		&\hspace{3cm}\psi_x\left(\frac{x-x_m}{\eps}\right)\psi_t\left(\frac{t-t_m}{\eta}\right)\,\rd x\,\rd t \\
		=& \frac{1}{\eps^3} \int_{-\frac{t_m}{\eta}}^{\frac{T-t_m}{\eta}}\int_{\rr^3}\int_{\rr^2}  e^{-\int_0^{t_m +\eta\tilde{t}} \sigma(x-\pp_{v_i}^{-1}(\delta y)s,\pp_{v_i}^{-1}(\delta y))\,  \rd s}\\
		&\hspace{2cm}\phi_x\left(\frac{x-\pp_{v_i}^{-1}(\delta y)(t_m +\eta\tilde{t})-x_i}{\eps}\right)\phi_v\left(y\right)\psi_x\left(\frac{x-x_m}{\eps}\right)\psi_t\left(\tilde{t}\right)\,\rd y\,\rd x\,\rd\tilde{t}
	\end{align*}
	where we substituted $\tilde{t}:= \frac{t-t_m}{\eta}$ and $y:= \frac{\pp_{v_i}(v)}{\delta}$ in the last equation. Now fixing $\eps$ and letting $\eta\to0$, $\delta\to0$, then  by  continuity of $\sigma$,$\phi_x$, $\pp_{v_i}^{-1}$ and \eqref{eq:phipsiproperties}, the dominated convergence theorem yields
	\begin{align*}
		&\lim_{\eta,\delta\to0}M_\psi(\rho_{0})\\
		=&\frac{1}{\eps^3} \int_{\rr^3} e^{-\int_0^{t_m } \sigma(x-v_is,v_i)\,  \rd s}\phi_x\left(\frac{x-v_it_m-x_i}{\eps}\right)\psi_x\left(\frac{x-x_m}{\eps}\right)\,\rd x\\
		&\hspace{3cm}\cdot\int_{\rr}\psi_t\left(\tilde{t}\right)\,\rd\tilde{t}\int_{\rr^2} \phi_v\left(y\right)\,\rd y\\
		=& \int_{\rr^3} e^{-\int_0^{t_m} \sigma(x_m+\eps\tilde{x}-v_is,v_i)\,  \rd s}\phi_x\left(\tilde{x}\right)\psi_x\left(\tilde{x}\right)\,\rd\tilde{x}\,.
	\end{align*}
	We used the substitution $\tilde{x}:= \frac{x-x_m}{\eps}$ in the last equation. Now set $\epsilon\to0$ and use the continuity of $\sigma$, we obtain
	\begin{align*}
		&\lim_{\eps\to0}\lim_{\eta,\delta\to0}M_\psi(f_{\phi,0})\\
		=&e^{-\int_0^{t_m} \sigma(x_m-v_is,v_i)\,  \rds}\int_{\rr^3} \phi_x\left(\tilde{x}\right)\psi_x\left(\tilde{x}\right)\,\rd\tilde{x} = e^{-\int_0^{t_m} \sigma(x_i+v_is,v_i)\,  \rds}  C_{\phi_x\psi_x}\,,
	\end{align*}
	where we used~\eqref{eqn:C_phipsi}. The proof is concluded.
\end{proof}

To prove Lemma \ref{lem:fgeq1}, we will first introduce the following Lemma. 
\begin{lemma}\label{lem:estMfgeqN}
	Let $g$ satisfy the following equation:
	\begin{equation}\label{eq:g}
		\begin{cases}
			\partial_t g +v\cdot\nabla g = -\sigma g +\mathcal{L}(g) +\mathcal{L}(h)\,,&(x,t,v)\in \rr^3\times [0,T]\times V\\
			g(x,t=0,v) = 0\,,
		\end{cases}
	\end{equation}
	where $\mathcal{L}$ and $\sigma$ are defined in~\eqref{eqn:def_L}-\eqref{eqn:def_sigma} for $K\in\mathcal{A}_K$ and $h$ is a given positive function, then the measurement of $g$ with respect to a general  measurement test function $\psi\in C_c^{\infty}$ is bounded by %that of a shifted $h$, namely:
	\begin{align}\label{eq:recKMfgeq2=OMf2}
		M_\psi(\<g\>) \leq& C_K|V|e^{C_K|V|T}M_\psi\left( \int_0^t\esup_x(\<h\>(x,s))\, \rd s\right) \,.
	\end{align}
\end{lemma}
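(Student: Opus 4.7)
The plan is to reduce the problem to a standard Grönwall-type estimate by first writing $g$ along characteristics, then passing to the velocity-integrated quantity $\langle g\rangle$, and finally applying the measurement operator.

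First I would solve \eqref{eq:g} by the method of characteristics (Duhamel's formula). Since $g(x,0,v)=0$ and $\sigma\geq 0$, one obtains
\begin{equation*}
g(x,t,v) = \int_0^t e^{-\int_s^t \sigma(x-v(t-\tau),v)\,\rd\tau}\bigl[\L(g)+\L(h)\bigr](x-v(t-s),s,v)\,\rd s
\leq \int_0^t \bigl[\L(g)+\L(h)\bigr](x-v(t-s),s,v)\,\rd s.
\end{equation*}
Using $K\in\mathcal{A}_K$, both $\L(g)(\cdot,\cdot,v)$ and $\L(h)(\cdot,\cdot,v)$ are pointwise bounded by $C_K\langle g\rangle$ and $C_K\langle h\rangle$ respectively. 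Substituting these bounds, integrating over $v\in V$, and using $|V|<\infty$ gives
\begin{equation*}
\langle g\rangle(x,t) \leq C_K|V|\int_0^t \bigl[\esup_{y}\langle g\rangle(y,s)+\esup_y\langle h\rangle(y,s)\bigr]\,\rd s,
\end{equation*}
where I have bounded the integrand at the backward-traced point by its essential supremum in space.

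Taking $\esup_x$ on both sides and setting $G(t):=\esup_x\langle g\rangle(x,t)$, $H(t):=\esup_x\langle h\rangle(x,t)$ yields the integral inequality
\begin{equation*}
G(t)\leq C_K|V|\int_0^t H(s)\,\rd s + C_K|V|\int_0^t G(s)\,\rd s.
\end{equation*}
Because the forcing term $C_K|V|\int_0^t H(s)\,\rd s$ is nondecreasing in $t$, Grönwall's inequality delivers
\begin{equation*}
G(t)\leq C_K|V|\,e^{C_K|V|t}\int_0^t H(s)\,\rd s \leq C_K|V|\,e^{C_K|V|T}\int_0^t H(s)\,\rd s.
\end{equation*}

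Finally I would apply the measurement operator: since $\psi\geq 0$,
\begin{equation*}
M_\psi(\<g\>) = \int_0^T\!\!\int_{\rr^3}\<g\>(x,t)\,\psi(x,t)\,\rdx\,\rdt \leq \int_0^T\!\!\int_{\rr^3} G(t)\,\psi(x,t)\,\rdx\,\rdt,
\end{equation*}
and plugging in the Grönwall bound for $G(t)$ and recognizing the right-hand side as $M_\psi\bigl(\int_0^t H(s)\,\rd s\bigr)$ gives exactly \eqref{eq:recKMfgeq2=OMf2}. The only mildly delicate step is the legitimacy of passing to the essential supremum inside the characteristic integral; this is straightforward provided $\langle g\rangle$ and $\langle h\rangle$ are measurable and essentially bounded in $x$ uniformly on $[0,T]$, which follows from the mild regularity assumed for solutions in the admissible class. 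No subtler tool than linear Grönwall is needed.
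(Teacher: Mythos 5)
Your argument is correct and essentially identical to the paper's: integrate along characteristics (dropping the nonnegative absorption term, which—as in the paper—relies on positivity of the solution $g$), bound the gain terms by $C_K\<g\>$ and $C_K\<h\>$, pass to $\esup_x$, apply the integral form of Gr\"onwall with the nondecreasing forcing term, and conclude via linearity and monotonicity of $M_\psi$. No meaningful difference in route or in the constants obtained.
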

\begin{proof}
	The proof is a direct calculation. Integrate~\eqref{eq:g} along the characteristics, we have
	\begin{align*}
		&\esup_x\<g\>=\esup_x\int_V g (x,t,v)\, \rdv \\
		&= \esup_x\int_V\int_0^t \left[-\sigma g +\mathcal{L}(g) + \mathcal{L}(h) \right](x-vs,t-s,v)\, \rds\, \rdv \\
		&\leq C_K \int_V\int_0^{t} (\esup_x\<g\>+\esup_x\<h\>) (x-vs,t-s)) \, \rds\, \rdv \\
		&= C_K|V|\int_0^{t}\esup_x(\<g\>  (x,s)) \, \rds +\underbrace{C_K|V|\int_0^{t} \esup_x(\<h\> (x,s)) \, \rds}_{=:\alpha(t)}
	\end{align*}
	where we used the positivity of $g$~\cite{Majorana_Positivityf_1997} and $\sigma$ as well as the boundedness of $K$ in the inequality and a change of variables. 
	We call the integral form of Gronwall's lemma % e.g. Version of Webb for L^{\infty}_+ functions
	and use the fact that $g(x,t=0,v)=0$ and $\alpha$ is non-decreasing in order to obtain:
	\begin{align*}
		\esup_x\<g\>\leq & \alpha(t)e^{\int_0^tC_K|V|\rds} = C_K|V|e^{C_K|V|t}\int_0^{t} \esup_x(\<h\> (x,s)) \, \rds
	\end{align*}
	Noting that $M_\psi(\<g\>)$ % = \int_{0}^T\int_{\mathbb{R}^3}\psi(x,t)\<g\>(x,t)\,\rdx\,\rdt\leq \int_{0}^T\esup_{x'}(\<g\>(x',t))\int_{\mathbb{R}^3}\psi(x,t)\,\rdx\,\rdt 
	is a linear operator and preserves the monotonicity, we conclude~\eqref{eq:recKMfgeq2=OMf2}.
\end{proof}
With the above Lemma at hand, we can readily prove Lemma \ref{lem:fgeq1}.

\begin{proof}[Proof of Lemma \ref{lem:fgeq1}]\label{proof:lemmaMfgeq1To0}
	We further decompose $f_{\geq 1} = f_1+f_2+...+f_{N}+f_{\geq N+1}$ for some  $N\in \mathbb{N}$ to be chosen later, with each level of $f_n$, $n\geq 1$, satisfying
	\begin{align}\label{eq:fneq}
		\begin{cases} \partial_t f_n +v\cdot \nabla f_n = -\sigma f_n + \mathcal{L}{f_{n-1}}\,,\\
			f_n(x,t=0,v) = 0\,,
		\end{cases}
	\end{align}
	and the last level
	\begin{align*}
		\begin{cases} \partial_t f_{\geq N+1} +v\cdot \nabla f_{\geq N+1} = -\sigma f_{\geq N+1} + \mathcal{L}{f_{ N}}+ \mathcal{L}{f_{\geq N+1}} \,,\\
			f_{\geq N+1}(x,t=0,v) = 0\,.
		\end{cases}
	\end{align*}
	Then the measurement decomposes accordingly, i.e.
	\[
	M(\rho_{\geq 1}) = M(\rho_{ 1}) + M(\rho_{2})+...+M(\rho_{N}) + M(\rho_{\geq N+1})\,.
	\]
	Our objective is to show that in the scaling limit, all $M(\rho_i)$ vanish for $i\leq N$ and $M(\rho_{\geq N+1})$ is arbitrarily small for a big $N$.
	\begin{itemize}[leftmargin=*]
		\item 
		To do so we first write an explicit expression for $M(\rho_n)$ for an arbitrary $n\in \mathbb{N}$. We integrate \eqref{eq:fneq} along characteristics and use the fact that $\sigma, f_n$ are nonnegative to see
		\begin{align*}
			f_n(x,t,v_0)& = \int_0^t [-\sigma f_n + \mathcal{L}(f_{n-1}) ](x-v_0s_0,t-s_0,v_0)\, \rd s_0 \\
			&\leq C_K  \int_0^t  \<f_{n-1}\>(x-v_0s_0,t-s_0)\, \rd s_0 \\
			&= C_K  \int_0^t \int_V f_{n-1}(x-v_0s_0,t-s_0,v_1)\, \rd v_1\,\rd s_0\,.
		\end{align*}
		In this notation, $v_0$ is the last direction in which the bacteria of $f_n$ run, and $s_0$ the time  for which they run into this direction. Respectively, $v_j,$ and $s_j$ denote the direction and time in which the bacteria run after their $(n-j)$-th tumble.%, compare figure \ref{fig:notationvjsj}. 
		% \begin{figure}[H]
			%     \centering
			%     \includegraphics[width=0.8\textwidth]{pics/notationvjsj.JPG}
			%     \caption{The path of a bacterium from $f_n$ tumbling exactly $n$ times before it reaches location $x$. \kh{If we decide to keep this figure, I will draw a prettier one.}}
			%     \label{fig:notationvjsj}
			% \end{figure}
		
		By induction,
		\begin{align}\label{eq:estf_n}
			f_n(x,t,v_0) \leq C_K^{n} \int_0^t \int_V \int_0^{t-s_0}\int_V...\int_0^{t-\sum\limits_{j=0}^{n-2}s_j}\int_V f_0\left(x-\sum_{j=0}^{n-1} s_jv_j, t-\sum_{j=0}^{n-1} s_j, v_n \right)\\
			\, \rd v_n \, \rd s_{n-1}...\rd v_2\, \rds_1\, \rdv_1\, \rds_0\nonumber\\
			\leq C_K^{n} \int_0^t \int_V ...\int_0^{t-\sum\limits_{j=0}^{n-2}s_j}\int_V \phi\left(x-\sum_{j=0}^{n-1} s_jv_j-\left( t-\sum_{j=0}^{n-1} s_j\right) v_n, v_n \right)\nonumber\\
			\, \rd v_n \, \rd s_{n-1}... \rdv_1\, \rds_0,\nonumber
		\end{align}
		where we bounded $f_0(x,t,v)$ by $\phi(x-vt,v)$ using~\eqref{eq:f0} and noting $\sigma\geq 0$. Inserting this into the measurement and calling the dominated convergence theorem, we have
		\begin{align*}
			&M(\rho_n) = \int_{\rr^3}\int_0^T \int_V f_n(x,t,v_0)\, \rdv_0\, \psi(x,t) \, \rdt\, \rdx \\
			&\leq  C_K^{n}\int_{\rr^3}\int_0^T \int_V \bigg[\int_0^t \int_V ...\int_0^{t-\sum\limits_{j=0}^{n-2}s_j}\int_V \phi\left(x-\sum_{j=0}^{n-1} s_jv_j-\left( t-\sum_{j=0}^{n-1} s_j\right) v_n, v_n \right)\\
			&\hspace{6cm}\, \rd v_n \, \rd s_{n-1}... \rdv_1\, \rds_0\,\bigg]\,\rdv_0 \,\psi(x,t)\,  \rd t \, \rd x\\
			&\xrightarrow[]{\delta, \eta\to 0}  C_K^{n}\int_{\rr^3} \int_V \int_0^{t_m} \int_V ...\int_0^{t_m-\sum\limits_{j=0}^{n-2}s_j}\frac{1}{\eps^3}\psi_x\left(\frac{x-x_m}{\eps}\right)\cdot\\
			&\hspace{1.5cm}\phi_x\left(\frac{x-\sum_{j=0}^{n-1} s_jv_j-\left( t_m-\sum_{j=0}^{n-1} s_j\right) v_i-x_i}{\eps}\right)
			\, \rd s_{n-1}... \rdv_1\, \rds_0\,\rdv_0 \,\,  \rd x\\
			&= C_K^{n}\int_{\rr^3} \int_V \int_0^{t_m} \int_V ...\int_0^{t_m-\sum\limits_{j=0}^{n-2}s_j}\psi_x\left(\tilde{x}\right)\cdot\\
			&\hspace{4cm}\phi_x\left(\tilde{x} +\frac{\sum_{j=0}^{n-1} s_j(v_i-v_j)}{\eps}\right)
			\,\rd s_{n-1}... \rdv_1\, \rds_0\,\rdv_0 \,  \rd \tilde{x}.
		\end{align*}
		In the last line we used the substitution $\tilde{x}=\frac{x-x_m}{\eps}=\frac{x-x_i- v_it_m}{\eps}$. Now for $\eps\to 0$, one has
		\begin{align*}
			\phi_x\left(\tilde{x}+\frac{\sum_{j=0}^{n-1} s_j(v_i-v_j)}{\eps}\right)\to
			%& \begin{cases}\phi_x(\tilde{x}),&\sum\limits_{j=0}^{n-1} s_j(v_i-v_j) =0\\
			% 0, &\text{ else}\end{cases} \\      &     =
		\phi(\tilde{x})\mathds{1}_0\left(\sum_{j=0}^{n-1} s_j(v_i-v_j))\right),
	\end{align*}
	where $\mathds{1}$ denotes the indicator function\footnote{$\mathds{1}_A(a) = 1$ for $a$ is in the set $A$ and zero else and $\mathds{1}_{a'} := \mathds{1}_{\{a'\}} $ for elements $a'$ in some set.}. Using the dominated convergence theorem again, we have:
	\begin{align*}
		\lim_{\eps\to 0}\lim_{\delta,\eta\to 0}M(\rho_n)&\leq C_{\phi,\psi} C_K^{n} \int_V \int_0^{t_m} \int_V ...\int_0^{t_m-\sum_{j=0}^{n-2}s_j}\mathds{1}_{0}\left(\sum_{j=0}^{n-1} s_j(v_i-v_j)\right)\\
		&\hspace{6cm}
		\,\rd s_{n-1}... \rdv_1\, \rds_0\,\rdv_0\\
		&= 0,
	\end{align*}
	where the last equality holds true, because the integration is taken on a measure-zero set for a bounded integrand.
	We conclude $M_\psi(\rho_{n})=0$ in the limit of $\epsilon,\delta,\eta\to0$.
	\item
	We now proceed to show the smallness of $M(\rho_{\geq N+1})$. Apply Lemma~\ref{lem:estMfgeqN} with $g:= f_{\geq N+1}, h:= f_{N}$, we have
	\begin{align}\label{eq:MfgeqNbound}
		M(\rho_{\geq N+1}) \leq &C_K |V|e^{C_K|V|T}M\left(\int_0^t \esup_x\<f_N\>(x,s)\,\rds\right).
	\end{align}
	Using estimate \eqref{eq:estf_n} for $f_N$ as well as   $\max_x\phi(x,v) \leq \frac{1}{\eps^3\delta^2}\phi_v\left(\frac{\pp_{v_i}(v)}{\delta}\right)j(v;v_i)$, we have
	\begin{align*}
		&\int_0^t \esup_x\<f_N\>(x,s)\,\rds\\
		&\leq C_K^ {N} \int_0^t\int_V\bigg[\int_0^{s}\int_V...\int_0^{s-\sum\limits_{j=0}^{N-2}s_j}\int_V\frac{1}{\eps^3\delta^2} \phi_v\left(\frac{\pp_{v_i}(v_N)}{\delta}\right)j(v_N;v_i)\\
		&\hspace{7cm}\rdv_N\,\rds_{N-1}...\rdv_1\, \rds_0\bigg]\,\rdv_0\,\rds\\
		&\leq \frac{1}{\eps^3}C_K^ {N}|V|^{N}t^{N+1}
		\int_V\frac{1}{\delta^2} \phi_v\left(\frac{\pp_{v_i}(v_N)}{\delta}\right)j(v_N;v_i)\,\rdv_N.
	\end{align*}
	Since the above integral over $v_N$ has value $1$,  \eqref{eq:MfgeqNbound} gives
	\begin{align*}
		M(\rho_{\geq N+1})
		&    \leq (C_K|V|T)^{N+1} e^{C_K|V|T}
		\int_{\rr^3}\int_0^T\frac{1}{\eps^3}\psi(x,t)\, \rdt\, \rdx \\
		&\leq (C_K|V|T)^{N+1} e^{C_K|V|T}.
	\end{align*}
	This shows $M(\rho_{\geq N+1}) $ becomes arbitrarily small as $N$ grows, when  $C_K|V|T<1$.  
	In summary, this proves Lemma \ref{lem:fgeq1}.
\end{itemize}
\end{proof}

\section{Reconstructing $K$}\label{sec:RecK}
This section is dedicated to the reconstruction of $K$ from macroscopic measurements of the bacteria density $\rho$. The idea is similar to the previous section: A class of special functions are used as the initial conditions, and accordingly the measurement test functions are designed. These functions carry certain type of singularity and are designed to be compatible with each other, so the measurement singles out a trajectory that we would like to get information about. In the end, we will prove the following theorem.
\begin{theorem}[Unique reconstruction of $K$]\label{thm:recK}
Let $\sigma\in\mathcal{A}_{\sigma}$ and $K\in\mathcal{A}_{K}$. The map $\Lambda_K$ uniquely determines $K(x,v,v')$. In particular, for any $(x,v,v')$, by a proper choice of $\phi$ and $\psi$, one can explicitly express $K(x,v,v')$ in terms of $M_\psi(\rho_\phi)$, with $\rho_\phi$ being the density associated with $f_\phi$ that solves~\eqref{eq:chemotaxis}.

% Define the input to measurement map: $\Lambda:\kh(\phi\kh{,\mu)}\mapsto M_{\psi}(\rho_f^\phi)$ with both $\phi$ and $\psi$ in $C^\infty_{c,+}$, and $f_\phi$ solves~\eqref{eq:chemotaxis} using $\phi$ as the initial data. If $\sigma$ is non negative and continuous, the knowledge of the map uniquely determines $K(x,0,v,v')$ in the admissible set $\mathcal{A}_{K}$.\ql{notations need fix}
\end{theorem}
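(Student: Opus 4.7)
The strategy mirrors Theorem~\ref{thm:unique_sigma} but pushes the singular decomposition one order deeper. I would decompose $f_\phi = f_0 + f_1 + f_{\geq 2}$, where $f_0$ still solves~\eqref{eqn:f0}, $f_1$ obeys a transport--absorption equation with source $\L(f_0)$ and zero initial data (so it describes bacteria having tumbled exactly once), and $f_{\geq 2}$ collects the remainder with sources $\L(f_1)+\L(f_{\geq 2})$. Accordingly $M_\psi(\rho_\phi) = M_\psi(\rho_0) + M_\psi(\rho_1) + M_\psi(\rho_{\geq 2})$. The triple $K(x_t,v_o,v_i)$ is encoded in $f_1$ along the broken trajectory ``leave $(x_i,v_i)$ straight, tumble at $x_t$ to direction $v_o$, arrive at $(x_m,t_m)$,'' so the task is to design $(\phi,\psi)$ that isolates exactly one such path.

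I would retain the form of $\phi$ and $\psi$ in~\eqref{eqn:def_phi_psi} but now place the detector at $x_m\neq x_i+v_it_m$. The mismatch immediately gives $\lim_{\eps\to0}\lim_{\delta,\eta\to0}M_\psi(\rho_0)=0$, because in the change of variables of Lemma~\ref{lem:f1} the spatial mollifiers $\phi_x$ and $\psi_x$ are separated by a positive distance. The tail $M_\psi(\rho_{\geq 2})$ is handled as in Lemma~\ref{lem:fgeq1}: decompose $f_{\geq 2} = f_2 + \cdots + f_N + f_{\geq N+1}$, apply Lemma~\ref{lem:estMfgeqN} with $g=f_{\geq N+1}$ and $h=f_N$ to make the last term arbitrarily small under $C_K|V|T<1$, and show that each intermediate $M_\psi(\rho_n)$ for $n\geq 2$ vanishes in the scaling limit because the set of $n$-broken paths joining $(x_i,v_i)$ to $(x_m,t_m)$ has dimension strictly less than the parameter space of intermediate times and directions.

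The core step is the limit of $M_\psi(\rho_1)$. Using the Duhamel formula together with~\eqref{eq:f0},
\[
 f_1(x,t,v_0) = \int_0^t e^{-\int_0^{s_0}\sigma(x-v_0s',v_0)\,\rd s'} \int_V K(x-v_0s_0,v_0,v_1)\, f_0(x-v_0s_0,t-s_0,v_1)\,\rdv_1\,\rd s_0,
\]
and plugging in the ansatz~\eqref{eqn:def_phi_psi}, the limits $\delta,\eta\to0$ force $v_1=v_i$ and $t=t_m$ by dominated convergence, leaving a spatial Dirac limit controlled by the zero set of the map $G(s_0,v_0):=x_m-v_0s_0-v_i(t_m-s_0)-x_i$ from $(0,t_m)\times V$ to $\rr^3$. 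A direct geometric calculation shows that $G=0$ has the unique root
\[
 s^\ast = \frac{t_m^2-|x_m-x_i|^2}{2(t_m-(x_m-x_i)\cdot v_i)},\qquad v_o^\ast = \frac{x_m-x_i-v_is^\ast}{t_m-s^\ast},
\]
whenever $s^\ast\in(0,t_m)$ and $v_o^\ast\in V\setminus\{v_i\}$, and that the Jacobian $|\det DG(s^\ast,v_o^\ast)|=(s^\ast)^2(1-v_o^\ast\cdot v_i)$ is strictly positive. The final $\eps\to 0$ limit then yields
\[
 \lim_{\eps\to0}\lim_{\delta,\eta\to0}M_\psi(\rho_1) = \frac{C_{\phi,\psi}\,E(x_i,v_i,x_t,v_o^\ast,t_m)}{|\det DG(s^\ast,v_o^\ast)|}\,K(x_t,v_o^\ast,v_i),
\]
where $x_t=x_i+v_is^\ast$ and $E$ is the two-leg $\sigma$-attenuation along the broken path. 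Since $\sigma$ has already been reconstructed by Theorem~\ref{thm:unique_sigma}, both $E$ and $\det DG$ are explicit from the geometry, so one can read off $K(x_t,v_o^\ast,v_i)$ directly from $M_\psi(\rho_\phi)$. Varying $(x_i,v_i,x_m,t_m)$ sweeps every admissible triple in $\rr^3\times W$, giving both the unique reconstructability and an explicit formula.

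The main obstacle is the Jacobian and transversality analysis. Since $\psi$ carries no velocity dependence, the concentration of $M_\psi(\rho_1)$ at the correct post-tumble direction $v_o^\ast$ must emerge solely from the interplay between the source singularity at $(x_i,v_i)$ and the detector singularity at $(x_m,t_m)$. Making this rigorous requires choosing compatible stereographic charts on $V$ near $v_o^\ast$, verifying that $DG$ is nondegenerate precisely when $v_o^\ast\neq v_i$ (which is exactly why $W$ excludes the diagonal in $\mathcal{A}_K$), and justifying the interchange of limits by dominated convergence. Once this geometric input is in place, the remainder is a routine change-of-variables computation together with the Gronwall-based estimate of Lemma~\ref{lem:estMfgeqN}, in complete parallel with Section~\ref{sec:RecSigma}.
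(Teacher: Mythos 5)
There is a genuine gap, located exactly at the two places where the paper's proof of Theorem~\ref{thm:recK} must depart from Section~\ref{sec:RecSigma}, and your proposal departs at neither. First, you keep the pair $(\phi,\psi)$ of \eqref{eqn:def_phi_psi}, whose spatial detector profile $\psi_x\left(\frac{x-x_m}{\eps}\right)$ is not normalized (it integrates to $\eps^3$). That scaling extracts the ballistic term because $f_0(\cdot,t_m,\cdot)$ is itself an $\eps^{-3}$-amplitude bump of unit mass; but after one tumble the density $\rho_1(\cdot,t_m)$ near $x_m$ is only an $O(1)$ bounded function: the $\eps^{-3}$ amplitude of $\phi$ is exactly consumed by the $O(\eps^3)$ volume (in the variable $z=s(v-v_i)$, Jacobian $s^2(1-\<v,v_i\>)$) of the one-tumble paths whose endpoint hits the $\eps$-support of $\phi_x$. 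Testing an $O(1)$ function against your unnormalized $\psi$ gives $M_\psi(\rho_1)=O(\eps^3)\to 0$, not the finite limit you assert, and the constant $C_{\phi,\psi}$ cannot appear. This is precisely why the paper replaces \eqref{eqn:def_phi_psi} by \eqref{eq:testfK}: an independent spatial width $\nu$ with $\nu^{-3}$ normalization, multiplied by the compensating Jacobian constant $C_{\hat s,\hat v}=\hat s^2(1-\<v_i,\hat v\>)$ of \eqref{eq:Cshatvhat}.

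Second, if you repair this by renormalizing the detector to a spatial delta, your treatment of the tail collapses. The dimension-counting argument of Lemma~\ref{lem:fgeq1} kills $M_\psi(\rho_n)$ only because the unnormalized detector integrates the multiply-scattered density over an $\eps$-ball; with a $\nu^{-3}$-normalized detector and \emph{fixed} $t_m>0$, $M_\psi(\rho_{\geq 2})$ converges to (a constant times) $\rho_{\geq 2}(x_m,t_m)$, which is a strictly positive $O(1)$ background depending on $K$ globally: the set of two-tumble paths reaching $x_m$ exactly is measure zero, but the point value of the twice-scattered density is not zero, and $C_K|V|T<1$ does not remove it. So at fixed $t_m$ the measurement mixes $K$ at the tumbling point with an unknown multiple-scattering contribution. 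The paper's resolution, absent from your proposal, is the short-time scaling $t_m=\eps^{\alpha}$ with $\alpha\in\left(\frac{3}{4},1\right)$ and $\hat s=\lambda t_m$: then Lemma~\ref{lem:recK:fgeq2} gives $M_\psi(\rho_{\geq 2})\leq C\eps^{4\alpha-3}\to0$ while the single-scattering term still converges to $K(x_i,\hat v,v_i)$; moreover the attenuation factors tend to $1$ as $t_m\to0$, so, unlike in your formula, no prior reconstruction of $\sigma$ is needed. Two smaller slips: your explicit $s^{\ast}$ is the duration of the pre-tumble leg, although in your map $G$ the variable $s_0$ is the post-tumble leg, and accordingly the Jacobian should be $(t_m-s^{\ast})^2(1-\<v_o^{\ast},v_i\>)$ in your notation, matching the paper's $\hat s^2(1-\<v_i,\hat v\>)$ with $\hat s$ the post-tumble duration.
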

%\kh{Next sentence is a repetition to 'introduction' of the section, maybe delete one?}
%Similar to the previous section, we would like to design $(\phi,\psi)$ that have compatible singularities to single out different components of the solution based on their regularity, and use the separated information for the reconstruction. 
Note %however 
that $K$ is the tumbling kernel, so the reconstruction necessarily needs at least one scatter. To do so, we decompose $f$ into three, instead of two parts. Let $f_\phi= f_{\phi,0}+f_{\phi,1}+f_{\phi,\geq 2}$, where $f_{\phi,0}$ solves~\eqref{eqn:f0} using $\phi$ as the initial data, and $f_{\phi,1}$ and $f_{\phi,\geq 2}$ solve the following:
\begin{align} \label{eq:f_1PDE}
&\left\{\begin{array}{r l}\partial_t f_{\phi,1}+v\cdot\nabla f_{\phi,1} &= -\sigma f_{\phi,1} +\L(f_{\phi,0}),\\
	f_{\phi,1}(x,t=0,v)& = 0,\end{array}\right.,
\end{align}
and
\begin{align}\label{eq:fgeq2PDE}
&\left\{\begin{array}{r l}\partial_t f_{\phi,\geq 2} +v\cdot \nabla f_{\phi,\geq 2} &= -\sigma f_{\phi,\geq 2} +\L(f_{\phi,1}+f_{\phi,\geq 2}),\\
	f_{\phi,\geq 2}(x,t=0,v) &= 0.\end{array}\right.
\end{align}
% With the previous notation, we easily obtain:
% \begin{equation}\label{eq:recK:decomposeMf}
% M_{\psi}(\rho) = M_{\psi}(\rho_0)+ M_{\psi}(\rho_1) +M_{\psi}(\rho_{\geq2})\,.
% \end{equation}
To reconstruct $K$, we will design a special set of test functions and initial conditions so to have, in certain scenarios:
\begin{equation}\label{eqn:estimate_goal2}
M_{\psi}(\rho_\phi) = M_{\psi}(\rho_{\phi,1}),\quad  M_{\psi}(\rho_{\phi,0})= 0 = M_{\psi}(\rho_{\phi,\geq 2})\,,
\end{equation}
and the measurement $ M_{\psi}(\rho_{\phi,1})$ is expected to give sufficient information to reconstruct $K$. As in the previous section, we omit the $\phi$ dependence in $\rho$ and $f$ when the context is clear.

As in the previous case, this will again hold in the limit as the initial data and measurement test functions become singular functions concentrated on initial velocity $v_i$ and  location  $x_i$ and measurement location $x_m$ and time $t_m$ respectively. 

Unlike in the previous case, we require $x_m$ to avoid the line formed by $x_i+v_it_m$ so to ensure at least one scatter. This means we require the particle to initially travel with velocity $v_i$ and change its direction to another $\hat{v}$ at a certain time $t_m-\hat{s}$.  See the illustration in Figure~\ref{fig:ellipse}. Our measurement location is thus be chosen as
\begin{equation}\label{eqn:tumble_point_constraints}
x_m =x_m(t_m)= x_i +\hat{s} \hat{v}+  (t_m-\hat{s})v_i \,,\quad\text{with}\quad \hat{s}=\lambda t_m, \lambda\in (0,1)\,, \hat{v}\in V\setminus\{v_i\}.
\end{equation}
Let us mention that for the fixed triplet of $(x_i,x_m,t_m)$, the quantities  $\hat{s},\hat{v}$ and thus  tumbling point at $(x_i+v_i(t_m-\hat{s}),\hat{v},v_i)$, that contributes information to the measurement, are uniquely determined, see Figure~\ref{fig:ellipse}.

Moreover, by gradually shrinking $t_m$ and $x_m-x_i$ in a suitable manner we  make  the ellipse of observation shrink while   the geometry features are sustained, in particular, $\hat{s}$ will stay a fixed fraction of $t_m$, pushing the tumbling point close to the starting point.

Because our results will hold asymptotically for small $t_m$, we wish to sustain the geometry as $t_m\to 0$ and choose $\hat{s}$  as a fixed fraction of $t_m$. We note that given such $x_i,x_m,t_m$, then $\hat{s},\hat{v}$ are unique, compare Figure~\ref{fig:ellipse}.

% Then for a given quadruplets $(x_i,v_i,x_m,t_m)$ with $\|x_i-x_m\|<t_m$ there is a unique set of solution $(\hat{s}\,,\hat{v})$ so that
% \begin{equation}\label{eqn:tumble_point_constraints}
% x_m = x_i +\hat{s} \hat{v}+  (t_m-\hat{s})v_i \,,\quad\text{with}\quad \hat{s}\in (0,t_m)\,, \hat{v}\in V\setminus\{v_i\}.
% \end{equation}
% This means the particle initially travels with velocity $v_i$ and changes its direction to $\hat{v}$ at time $t_m-\hat{s}$. See the illustration in Figure~\ref{fig:ellipse}. \new{As $t_m\to 0$, we want to keep this geometry unchanged. The measurement location $x_m=x_m(t_m)$ is adapted according to \eqref{eqn:tumble_point_constraints},  while the ratio $\hat{s}/t_m\equiv\lambda\in(0,1)$ stays constant when $t_m\to0$.}
%This geometry holds for all $t_m$, with the ratio $\hat{s}/t_m\equiv\lambda\in(0,1)$ stays constant when $t_m\to0$, and the measurement location $x_m=x_m(t_m)$ adapted according to \eqref{eqn:tumble_point_constraints}.

\begin{figure}[H]
\centering
\includegraphics[height = 3.5cm]{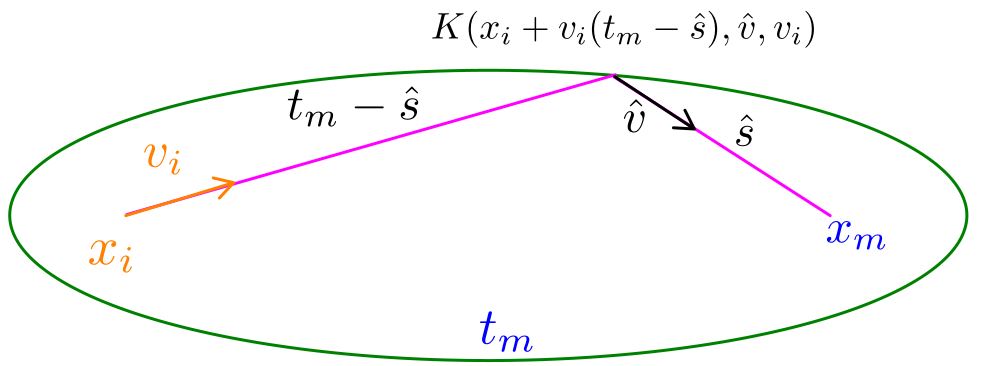}
\caption{For a fixed  $t_m>0$, the ellipse with focal points $x_i,x_m$ and radius $t_m$ determines all points $x$ with distance $\|x-x_i\|+\|x-x_m\|=t_m$. As $v_i$ is given, the unique tumbling point $x_i+v_i(t_m-\hat{s})$ is the intersection of the half line starting at $x_i$ in direction $v_i$ with this ellipse.}
\label{fig:ellipse}
\end{figure}

More specifically, let $\phi_x,\phi_v,\psi_x,\psi_t$ be defined as in \eqref{eq:phipsiproperties}, and we set $t_m:= \eps^{\alpha}$ for some $\alpha\in \left({\frac{3}{4}},1\right)$, then we let%\ql{why did you add ``for small $\delta$" in the definition?}\kh{otherwise $\phi$ is not $C_c^{\infty}$ because of the jacobian term $j(v)$.}
\begin{alignat}{3}
\phi(x,v) &= %\begin{cases}
\frac{1}{\eps^3\delta^2}\phi_x\left(\frac{x-x_i}{\eps}\right)\phi_v\left(\frac{\pp_{v_i}(v)}{\delta}\right)j(v;v_i), %&\langle v,v_i\rangle >0\\0, &\langle v,v_i\rangle <0,\end{cases}
&\quad\in C^{\infty}_c %\quad \text{ for small $\delta$,}
\label{eq:initdataK}\\
\psi(x,t)&= \frac{1}{\nu^3\eta}\psi_x\left(\frac{x-x_m}{\nu}\right)\psi_t\left(\frac{t-t_m}{\eta}\right)C_{\hat{s},\hat{v}}&\quad\in C^{\infty}_c\label{eq:testfK}
\end{alignat}
for small scaling parameters $\eps,\delta,\eta,\nu>0$ and the constant 
\begin{equation}\label{eq:Cshatvhat}
C_{\hat{s},\hat{v}}:= \hat{s}^2 (1-\< v_i,\hat{v}\>)\,.
\end{equation}
Note that the scaling of   $\psi$ is different from the one in~\eqref{eqn:def_phi_psi}, in particular the scalings of $x$ are different in $\phi$ and $\psi$. We request $t_m=\eps^{\alpha}\xrightarrow{\eps\to 0} 0$. Note that $\epsilon$ is the rate at which $\phi$ is converging to a delta-measure in $x$. Since  $\alpha<1$, the convergence of the observation time $t_m\to 0$ is slightly slower than the convergence of the initial condition $\phi$.
The observation of  the propagation of singularities typically relies on small time requirements, see e.g.~\cite{Doumic_Tournus_ReconstructFragmentationKernelShortTime}. In this paper, we additionally exploit a particular relation between time and spatial scaling that appears to be beneficial to  control  the multiple tumble part $M_\psi(\rho_{\phi,\geq 2})$.

We claim for this setup, with $t_m$ being very small, we will be able to achieve the estimate~\eqref{eqn:estimate_goal2}, and the measurement exactly reflects the value of $K(x_i+v_i(t_m-\hat{s}),\hat{v},v_i)$, as seen in the following Lemmas.
\begin{lemma}\label{lem:recK:Mf_0to0}
Let $\sigma$ be from the admissible set, and let the $(\phi,\psi)$ pairs  be defined as in~\eqref{eq:initdataK}-\eqref{eq:testfK}, then $M_{\psi}(\rho_{0})$ vanishes in the limit, meaning:
\begin{equation}\label{eq:recKMf0limit0}
	\lim_{\eps\to 0}\lim_{\delta, \nu,\eta \to 0} M_{\psi}(\rho_{0})=0\,.
\end{equation}
\end{lemma}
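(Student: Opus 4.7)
The plan is to mimic the computation in the proof of Lemma \ref{lem:f1} from Section \ref{sec:RecSigma}, but to exploit the fact that the geometric setup here is incompatible with the ballistic trajectory: in contrast to the $\sigma$-reconstruction, $x_m$ is deliberately placed \emph{off} the line $x_i+v_i t$ (see \eqref{eqn:tumble_point_constraints}), so that straight-line propagation alone cannot connect the initial and measurement windows. Concretely, I would first use the explicit characteristic formula
\[
f_0(x,t,v)=e^{-\int_0^{t}\sigma(x-vs,v)\,\rds}\,\phi(x-vt,v),
\]
already derived as \eqref{eq:f0}, and plug it together with the forms \eqref{eq:initdataK}--\eqref{eq:testfK} of $\phi$ and $\psi$ into the definition \eqref{eqn:measurement} of $M_\psi(\rho_0)$.

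Next I would perform the changes of variables $\tilde t=(t-t_m)/\eta$, $\tilde x=(x-x_m)/\nu$, and $y=\pp_{v_i}(v)/\delta$, which localizes $(\tilde t,\tilde x,y)$ on the compact supports of $\psi_t,\psi_x,\phi_v$ and absorbs the factors $\eta,\nu^3,\delta^2$ in the prefactor. Fixing $\eps$ and letting $\delta,\nu,\eta\to 0$, continuity of $\sigma$ and $\pp_{v_i}^{-1}$ together with dominated convergence (using $\|\sigma\|_\infty\le C_\sigma$ and the compactness of the supports) pushes $v\to v_i$, $x\to x_m$, $t\to t_m$ inside $\phi_x$. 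The remaining $\tilde t,\tilde x, y$ integrals factor out and contribute the constant $1$ by the normalization in \eqref{eq:phipsiproperties}, leaving
\[
\lim_{\delta,\nu,\eta\to 0}M_\psi(\rho_0)=C_{\hat s,\hat v}\,e^{-\int_0^{t_m}\sigma(x_m-v_i s,v_i)\,\rds}\,\frac{1}{\eps^3}\,\phi_x\!\left(\frac{x_m-v_i t_m-x_i}{\eps}\right).
\]
Since $x_m-v_i t_m-x_i=\hat s(\hat v-v_i)=\lambda t_m(\hat v-v_i)$ by \eqref{eqn:tumble_point_constraints}, the argument of $\phi_x$ has Euclidean norm $\lambda\,t_m|\hat v-v_i|/\eps$.

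The final step is to activate the scaling relation $t_m=\eps^\alpha$ with $\alpha\in(3/4,1)$: the argument norm equals $\lambda|\hat v-v_i|\,\eps^{\alpha-1}$, and because $\alpha<1$ and $\hat v\neq v_i$, this tends to $+\infty$ as $\eps\to 0$. Therefore, for all sufficiently small $\eps$ the argument lies outside $B^3(0,1)=\supp(\phi_x)$, which makes $\phi_x(\cdot)$ vanish identically. This yields \eqref{eq:recKMf0limit0}. The main subtlety I anticipate is the competition between the diverging prefactor $1/\eps^3$ and the factor $\phi_x(\cdot)$: it is crucial that $\phi_x$ is exactly zero (not merely small) for small $\eps$, so the argument relies on compactness of $\supp(\phi_x)$ rather than on quantitative decay; this is precisely what forces the constraint $\alpha<1$ and requires $x_m$ to lie off the ballistic line. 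The upper bound $\alpha>3/4$ plays no role here, but will enter in the complementary estimates for $M_\psi(\rho_{\geq 2})$ and the extraction of $K$ from $M_\psi(\rho_1)$.
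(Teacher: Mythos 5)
Your proposal is correct and follows essentially the same route as the paper: insert the explicit characteristic solution \eqref{eq:f0}, rescale $(\tilde t,\tilde x,y)$, pass to the limit $\delta,\nu,\eta\to 0$ by dominated convergence, and then observe that $\|x_m-v_it_m-x_i\|/\eps=\|\lambda(\hat v-v_i)\|\,\eps^{\alpha-1}>1$ for small fixed $\eps$, so the compactly supported $\phi_x$ vanishes identically and the $1/\eps^3$ prefactor is harmless. Your remarks on the exact vanishing (rather than decay) of $\phi_x$ and on $\alpha>3/4$ being irrelevant for this lemma are consistent with the paper's argument.
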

\begin{lemma}\label{lem:recK:f1}
Let $K$ be from the admissible set, and let the initial data and test functions defined in \eqref{eq:initdataK}-\eqref{eq:testfK}, then the measurement  $M_{\psi}(\rho_{1})$ reconstruct $K$, in the sense that
\begin{equation}\label{eq:f1LimitNoTime}
	\lim_{\eps\to 0}\lim_{\delta, \nu,\eta \to 0}M_{\psi}(\rho_1)=  K(x_i,\hat{v},v_i)\,,
\end{equation}
where $\hat{v}$ is the velocity after tumbling used in the construction of the measurement location $x_m$ in~\eqref{eqn:tumble_point_constraints}. %is the solution to~\eqref{eqn:tumble_point_constraints}.
\end{lemma}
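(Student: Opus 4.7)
The plan is to integrate the inhomogeneous transport equation for $f_1$ along characteristics, substitute the explicit forms of $\phi$ and $\psi$, and then turn the four scaling parameters $\nu,\eta,\delta,\eps$ into delta-like concentrations on $x$, $t$, $v'$, and the pair $(s,v)$ respectively; the delicate part is the last of these.

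First, applying Duhamel to~\eqref{eq:f_1PDE} together with the explicit form~\eqref{eq:f0} of $f_0$ gives
\begin{align*}
f_1(x,t,v)=\int_0^t\!\!\int_V & e^{-\int_0^s\sigma(x-v\tau,v)\rd\tau}K(x-vs,v,v')\\
&\times e^{-\int_0^{t-s}\sigma(x-vs-v'\tau,v')\rd\tau}\phi(x-vs-v'(t-s),v')\,\rd v'\,\rd s.
\end{align*}
Inserting this into $M_\psi(\rho_1)$ and substituting $\tilde x=(x-x_m)/\nu$, $\tilde t=(t-t_m)/\eta$ and $y=\pp_{v_i}(v')/\delta$, one observes that the Jacobian of the last substitution exactly absorbs the factor $j(v';v_i)$ built into $\phi$, leaving an integrand uniformly bounded on a compact set. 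Dominated convergence in the order $\nu,\eta\to 0$ then freezes $(x,t)=(x_m,t_m)$ via $\int\psi_x=\int\psi_t=1$, and $\delta\to 0$ forces $v'\to v_i$ via $\int\phi_v=1$. Using the identity $x_m-v_it_m=x_i+\hat{s}(\hat{v}-v_i)$ that follows from~\eqref{eqn:tumble_point_constraints}, the outcome is
\[
\lim_{\delta,\nu,\eta\to 0}M_\psi(\rho_1)=\frac{C_{\hat{s},\hat{v}}}{\eps^3}\int_0^{t_m}\!\int_V A_\eps(s,v)\,\phi_x\!\Bigl(\tfrac{\hat{s}(\hat{v}-v_i)-s(v-v_i)}{\eps}\Bigr)\rd v\,\rd s,
\]
where $A_\eps(s,v)$ collects the two $\sigma$-exponentials and the factor $K(x_m-vs,v,v_i)$.

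The heart of the argument is the $\eps\to 0$ limit of this concentration, which I would analyze via the map $\Phi:(0,t_m)\times(V\setminus\{v_i\})\to\rr^3$, $\Phi(s,v):=s(v-v_i)$. A direct calculation, using that the outward unit normal of $\ss^2$ at $v$ is $v$ itself, shows that the Jacobian of $\Phi$ relative to $\rd s\,\rd v$ equals $s^2(1-v\cdot v_i)$; moreover $\Phi$ is injective on its domain, which is the geometric content of Figure~\ref{fig:ellipse}: given $x_i$, $v_i$, $x_m$ and $t_m$ the broken trajectory is unique. Hence $\Phi$ is a local diffeomorphism near the critical point $(\hat{s},\hat{v})\in(0,t_m)\times V$, with Jacobian there equal to $\hat{s}^2(1-v_i\cdot\hat{v})=C_{\hat{s},\hat{v}}$. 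Changing variable $u=(\Phi(s,v)-\Phi(\hat{s},\hat{v}))/\eps$ (here $\hat{s}=\lambda t_m=\lambda\eps^\alpha$ with $\alpha<1$ keeps the critical point well interior at a scale much larger than $\eps$) and letting $\eps\to 0$ yields the pointwise value $A_\eps(\hat{s},\hat{v})/C_{\hat{s},\hat{v}}$. The factor $C_{\hat{s},\hat{v}}$ in $\psi$ is engineered precisely to cancel this Jacobian, and because $t_m=\eps^\alpha\to 0$ the two $\sigma$-exponentials in $A_\eps$ tend to $1$ while the base point $x_m-\hat{v}\hat{s}=x_i+(t_m-\hat{s})v_i$ tends to $x_i$; continuity of $K$ then yields the asserted limit $K(x_i,\hat{v},v_i)$.

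The main obstacle I foresee is the Jacobian bookkeeping for $\Phi$ together with the justification that the three-dimensional concentration of $\phi_x$ is captured by a single interior critical point of the three-dimensional integration over $(s,v)\in(0,t_m)\times\ss^2$; the dimensions match, but the surface Jacobian on $\ss^2$ must be computed carefully and the injectivity of $\Phi$ read off from the ellipse picture must be made rigorous. Everything else reduces to dominated convergence and continuity of $K$ and $\sigma$.
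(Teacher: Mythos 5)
Your proposal is correct and follows essentially the same route as the paper: Duhamel for $f_1$, dominated convergence in $\nu,\eta,\delta$, then the change of variables $z=s(v-v_i)$ whose Jacobian $s^2(1-\langle v,v_i\rangle)$ is cancelled by the factor $C_{\hat{s},\hat{v}}$ built into $\psi$, with $t_m=\eps^\alpha\to 0$ sending the $\sigma$-exponentials to $1$ and the tumbling point to $x_i$. The one step you flag as delicate --- that the support of $\phi_x$ keeps the integration away from the degenerate set of $\Phi$ (small $s$ or $v$ near $v_i$) and that the rescaled domain eventually covers $B(0,1)$ --- is precisely what the paper handles by restricting to the set $U$ via the estimates~\eqref{eqn:time_restriction}--\eqref{eqn:angle_restriction} and the geometric argument of Figure~\ref{fig:Stransform}.
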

\begin{lemma}\label{lem:recK:fgeq2}
Assume $K$ and $\sigma$ are bounded and positive, then
\begin{equation}\label{eq:recK:Mf_geq2to0}
	\lim_{\eps\to 0}\lim_{\delta, \nu,\eta \to 0}M_{\psi}(\rho_{\geq 2})=  0.
\end{equation}
%\ql{Please directly say that $M_\psi(\rho_{\geq 2})=0$ in the statement of the lemma and merge the boundedness of $M(\rho_{\geq 2}$ by $M(\rho_B)$ into the proof.}
\end{lemma}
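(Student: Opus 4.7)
The natural plan is to apply Lemma~\ref{lem:estMfgeqN} with $g := f_{\geq 2}$ and $h := f_1$, since $f_{\geq 2}$ solves \eqref{eq:fgeq2PDE} with $f_1$ playing the role of an external source, matching the structure of \eqref{eq:g} exactly. The lemma immediately yields
\[
M_\psi(\rho_{\geq 2}) \leq C_K|V|\, e^{C_K|V|T}\, M_\psi\!\left(\int_0^t \esup_x \<f_1\>(x,s)\,\rds\right),
\]
reducing the statement to controlling $\esup_x\<f_1\>$ and verifying that the right-hand side vanishes in the prescribed limits.

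For the pointwise estimate on $f_1$, I would integrate \eqref{eq:f_1PDE} along characteristics, discard the nonpositive $-\sigma f_1$ contribution, and use $K\leq C_K$ together with $f_0(x,t,v)\leq \phi(x-vt,v)$ (from \eqref{eq:f0} and $\sigma\geq 0$) to obtain
\[
f_1(x,t,v)\leq C_K\int_0^t\!\int_V \phi\bigl(x-v\tau-(t-\tau)v',\,v'\bigr)\,\rdv'\,\rd\tau.
\]
Plugging in the explicit form \eqref{eq:initdataK} of $\phi$, the essential supremum over $x$ affects only the factor $\phi_x((\,\cdot-x_i)/\eps)$, whose supremum is $1$, leaving a uniform $\eps^{-3}$. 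The remaining $v'$-integral is the same angular normalization that appeared in the proof of Lemma~\ref{lem:f1}: the stereographic substitution $y=\pp_{v_i}(v')/\delta$ with Jacobian $j$ collapses it to $\int_{\rr^2}\phi_v(y)\,\rd y=1$, independent of $\delta$. This yields
\[
\esup_x\<f_1\>(x,s)\leq \frac{C_K|V|}{\eps^3}\,s,
\]
and one further integration in $s$ contributes a factor $t^2/2$.

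Applying $M_\psi$ to this $x$-independent bound, the $\nu$-limit is trivial because $\int\psi_x=1$, while $\tfrac{1}{\eta}\psi_t((t-t_m)/\eta)$ in the $\eta$-limit evaluates $t^2/2$ at $t=t_m$. One therefore arrives at
\[
M_\psi(\rho_{\geq 2}) \;\leq\; \frac{C_K^2|V|^2 e^{C_K|V|T}}{2}\cdot\frac{C_{\hat s,\hat v}\, t_m^2}{\eps^3}.
\]
By \eqref{eq:Cshatvhat} together with $\hat s=\lambda t_m$, one has $C_{\hat s,\hat v}=O(t_m^2)$, so with $t_m=\eps^\alpha$ the right-hand side scales as $\eps^{4\alpha-3}$. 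The condition $\alpha>3/4$ built into the definition \eqref{eq:testfK} makes the exponent strictly positive, forcing $M_\psi(\rho_{\geq 2})\to 0$ as $\eps\to 0$. The main delicacy is that the $\esup_x$-based estimate throws away all geometric information about which trajectories actually connect the source region to the detector, so the $t_m^4$-gain generated by the two time integrations together with the $C_{\hat s,\hat v}$ prefactor built into $\psi$ has to just overpower the single $\eps^{-3}$ blow-up coming from the spatial concentration of $\phi$; matching these scales is precisely the quantitative reason behind the choice $\alpha>3/4$.
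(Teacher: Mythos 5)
Your proposal is correct and follows essentially the same route as the paper: apply Lemma~\ref{lem:estMfgeqN} with $g=f_{\geq 2}$, $h=f_1$, bound $f_1\lesssim C_K t\,\eps^{-3}$ via the characteristics/normalization argument, and let the factor $C_{\hat s,\hat v}\,t_m^2=O(t_m^4)=O(\eps^{4\alpha})$ beat the $\eps^{-3}$ blow-up using $\alpha>3/4$. The only cosmetic difference is that the paper states the final inequality after taking $\lim_{\eta\to 0}$ and invokes the non-negativity of $\rho_{\geq 2}$ to conclude, which you leave implicit.
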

Together, these lemmas prove Theorem \ref{thm:recK}.

% According to Lemma~\ref{lem:estMfgeqN}, it is clear that $M(\rho_{\geq 2})$ is bounded by $M(\rho_B)$ from Lemma \ref{lem:recK:fgeq2}. 

\begin{proof}[Proof of Theorem \ref{thm:recK}]
Combining the previous three lemmas, we have in the limit, equation~\eqref{eqn:estimate_goal2} holds true, meaning:
\begin{equation}\label{eq:recK:limits}
	\lim_{\eps\to 0}\lim_{\delta,\nu,\eta\to 0} M_\psi(\rho_f^\phi) = \lim_{\eps\to 0}\lim_{\delta,\nu,\eta\to 0} M_\psi(\rho_1) =K(x_i,\hat{v},v_i).
\end{equation}

\end{proof}

The rest of this section is dedicated to showing Lemma~\ref{lem:recK:Mf_0to0},~\ref{lem:recK:f1} and~\ref{lem:recK:fgeq2}.

Lemma~\ref{lem:recK:Mf_0to0} states that the contribution from $f_0$ in the measurement vanishes as initial and measurement functions become singular. This is intuitively straightforward. Indeed, as illustrated in Figure~\ref{fig:f0awayxm}, $x_m$ is not along the straight line from $x_i$ in the direction of $v_i$, so if $\phi$ is singular enough, $x_m$ lies out of the support of $f_0$.
\begin{figure}[H]
\centering
\includegraphics[width = .5\textwidth]{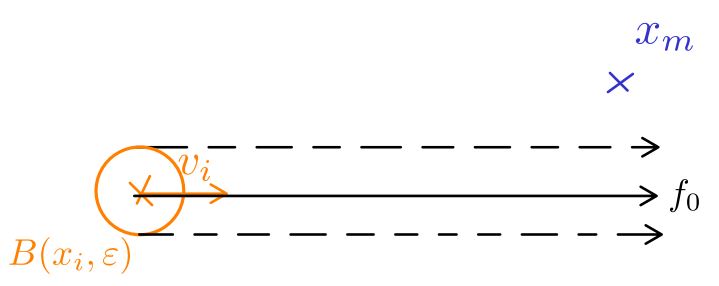}
\caption{Considering the situation of a point measurement of $f_0$ at  $(x_m,t_m)$ where the  initial velocity is prescribed $\phi(x,v) = \tilde{\phi}(x)\delta_{v_i}(v)$, the support of $f_0(\cdot, t_m,v_i)$ equals $\supp\{\tilde{\phi}(x_i +v_i t_m)\}\subset B(x_i+v_i t_m,\eps)$, the translated support of $\tilde{\phi}$. When $\eps$ becomes small, at some point $B(x_i+v_i t_m,\eps)$ no longer contains $x_m$, since  $x_m\neq x_i+v_i t_m$.}
\label{fig:f0awayxm}
\end{figure}

\begin{proof}[Proof of Lemma \ref{lem:recK:Mf_0to0}]
Recalling the solution to~\eqref{eqn:f0} is in~\eqref{eq:f0}. If we test it with $\psi$, and make use of the change of variable: $\tilde{t}=\frac{t-t_m}{\eta}$,  $y=\frac{\pp_{v_i}(v)}{\delta}$ and $\tilde{x}=\frac{x-x_m}{\eps}$, we have:
\begin{align*}
	&M_{\psi}(\rho_0) = \int_0^T\int_{\rr^3}\int_V f_0(x,t,v)\,\rdv\,\psi(x,t)\,\rd x\,\rd t \\
	=& \frac{{C_{\hat{s},\hat{v}}}}{\eps^3\delta^2\nu^3\eta} \int_0^T\int_{\rr^3}\int_V  e^{-\int_0^t \sigma(x-vs,v)\,  \rd s}\phi_x\left(\frac{x-vt-x_i}{\eps}\right)\cdot\\
	&     \hspace {2cm} \phi_v\left(\frac{\pp_{v_i}(v)}{\delta}\right)j(v)\,dv\,\psi_x\left(\frac{x-x_m}{\nu}\right)\psi_t\left(\frac{t-t_m}{\eta}\right)\,\rd x\,\rd t\\
	= &\frac{{C_{\hat{s},\hat{v}}}}{\eps^3} \int_{-\frac{t_m}{\eta}}^{\frac{T-t_m}{\eta}}\int_{\rr^3}\int_{\rr^2}  e^{-\int_0^{t_m +\eta\tilde{t}} \sigma(x_m +\nu \tilde{x}-\pp_{v_i}^{-1}(\delta y)s,\pp_{v_i}^{-1}(\delta y))\,\rd s}\\
	&\hspace{1cm}\phi_x\left(\frac{x_m +\nu \tilde{x}-\pp_{v_i}^{-1}(\delta y)(t_m +\eta\tilde{t})-x_i}{\eps}\right)\cdot\phi_v\left(y\right)\psi_x\left(\tilde{x}\right)\psi_t\left(\tilde{t}\right)\,\rd y\,\rd\tilde{x}\,\rd\tilde{t}\\
	% \leq &\frac{1}{\eps^3} \int_{-\infty}^{\infty}\int_{\rr^3}\int_{\rr^2} e^{-\int_0^{t_m +\eta\tilde{t}} \sigma(x_m +\nu \tilde{x}-\pp^{-1}(\delta y)s,t_m +\eta\tilde{t}-s,\pp^{-1}(\delta y))\,  ds}\\
	%&\hspace{1cm}\phi_x\left(\frac{x_m +\nu \tilde{x}-\pp^{-1}(\delta y)(t_m +\eta\tilde{t})-x_i}{\eps}\right)\cdot\phi_v\left(y\right)\psi_x\left(\tilde{x}\right)\psi_t\left(\tilde{t}\right)\,\rd y\,\rd\tilde{x}\,\rd\tilde{t}\\
	& \xrightarrow[]{\delta,\nu,\eta\to 0} \frac{{C_{\hat{s},\hat{v}}}}{\eps^3} e^{-\int_0^{t_m } \sigma(x_m -v_is,v_i)\,  \rd s}\phi_x\left(\frac{x_m -v_it_m -x_i}{\eps}\right)\,.
\end{align*}
In the last step, we exchange the limit with the integration using the dominated convergence theorem which is applicable because of the continuity of $\sigma$ and $\phi_x$. Noting the construction of $x_m$ in \eqref{eqn:tumble_point_constraints}, %that $\hat{s}=\lambda t_m=\lambda\eps^{\alpha}$ for fixed $\lambda\in(0,1)$ and $\alpha<1$ as well as $\hat{v}\neq v_i$,
\begin{displaymath}
	\left\|\frac{x_m -v_it_m -x_i}{\eps} \right\|= \frac{\|\hat{s}(\hat{v}-v_i)\|}{\eps} = \|\lambda(\hat{v}-v_i)\|\eps^{\alpha-1}>1
\end{displaymath}
for  any small enough, but fixed, $\eps>0$, making $\phi_x\left(\frac{x_m -v_it_m -x_i}{\eps}\right) = 0$ according to the definition of $\phi_x$. This proves \eqref{eq:recKMf0limit0}.
\end{proof}

\begin{proof}[Proof of Lemma \ref{lem:recK:fgeq2}]
Repeating the arguments in the \hyperref[proof:lemmaMfgeq1To0]{ proof of Lemma \ref{lem:fgeq1}} , we estimate the remainder
\begin{align*}
	M(\rho_{\geq 2})& \leq \frac{1}{\eps^3} C_K^2|V|e^{C_K|V|T}M\left(\int_0^t s\, \rd s\right)=  \frac{1}{\eps^3} C\int_0^T\int_{\rr^3} t^2\,\psi(x,t)\, \rdx\,\rd t\, C_{\hat{s},\hat{v}}\\
	&=   \frac{C}{\eps^3} \tilde{C}\int_0^T t^2\frac{1}{\eta}\psi_t\left(\frac{t-t_m}{\eta}\right)\,\rd t\, \hat{s}^2
\end{align*}
where $C:=C_K^2|V|e^{C_K|V|T}/2$ and $\tilde{C}:= (1-\<\hat{v},v_i\>)$ are positive constants and we used  $f_1 \leq  t C_K\eps^{-3}$, which can be seen in \eqref{eq:estf_n}. We employ the dominated convergence theorem to the right hand side to see
\begin{align*}
	\lim_{\eta\to 0} M(\rho_{\geq 2})\leq \frac{C}{\eps^3} \tilde{C}t_m^2 \hat{s}^2 = C\tilde{C}\lambda^2 \eps^{4\alpha -3} \xrightarrow{\eps\to 0} 0,
\end{align*}
because $\alpha>\frac{3}{4}$ was chosen. Together with the non-negativity of $\rho_{\geq 2}$, we conclude \eqref{eq:recK:Mf_geq2to0}.
\end{proof}

Lemma~\ref{lem:recK:f1} lies at the core of Theorem~\ref{thm:recK}, and the proof largely depends on explicit derivation. Noting that according to~\eqref{eqn:tumble_point_constraints}, with fixed $(x_i,v_i)$ and $(x_m,t_m)$, one finds a unique local point for the bacteria to tumble: $x_i+v_i(t_m-\hat{s})$, so the measurement should reflect $K$ evaluated at this particular point.
\begin{proof}[Proof of Lemma \ref{lem:recK:f1}]
We first derive a closed form for $f_1$ by solving~\eqref{eq:f_1PDE} along characteristics:
\begin{align}\label{eq:f_1}
	f_1(x,t,v) =& \frac{1}{\eps^3\delta^2}\int_0^t\int_V e^{-\int_0^s \sigma(x-v\tau, v) \rd\tau}  K(x-vs,v,v') \cdot \nonumber\\
	&\hspace{1cm}e^{-\int_0^{t-s} \sigma(x-vs-v'\tau,v')\,  \rd\tau}\phi_x\left(\frac{x-vs-v'(t-s)-x_i}{\eps}\right)\cdot \\ &\hspace{1cm}\phi_v\left(\frac{\pp_{v_i}(v')}{\delta}\right)j(v';v_i)\,\rd v'\,\rd s\,,\nonumber
\end{align}
where we have used the explicit solution $f_0$ as in~\eqref{eq:f0}. Plugging it into the definition of the measurement:
\begin{equation*}
	\begin{aligned}
		M_{\psi}(\rho_1) =& \int_0^T\int_{\rr^3}\int_V f_1(x,t,v)\,\rd v\,\psi(x,t)\,\rd x\,\rd t \\
		=&  \int_0^T\int_{\rr^3}\int_V  \int_0^t\int_V \frac{{C_{\hat{s},\hat{v}}}}{\eps^3\delta^2\nu^3\eta}e^{-\int_0^s \sigma(x-v\tau, v) \rd\tau}  K(x-vs,v,v') \cdot\\
		&\hspace{1cm}e^{-\int_0^{t-s} \sigma(x-vs-v'\tau,v')\,  \rd\tau}\phi_x\left(\frac{x-vs-v'(t-s)-x_i}{\eps}\right)\cdot\\
		&\hspace{1cm}\phi_v\left(\frac{\pp_{v_i}(v')}{\delta}\right)j(v')\rd v'\,\rd s\,\rd v\,\psi_x\left(\frac{x-x_m}{\nu}\right)\psi_t\left(\frac{t- t_m}{\eta}\right)\,\rd x\,\rd t\,.
	\end{aligned}
\end{equation*}
Taking the limit and use dominant convergence theorem:
\begin{equation}\label{eqn:M_1_K}
	\begin{aligned}
		&\lim_{\delta,\nu,\eta\to 0}M_{\psi}(\rho_1)\\
		=& \int_0^{t_m}\int_V\frac{{C_{\hat{s},\hat{v}}}}{\eps^3} e^{-\int_0^s \sigma(x_m-v\tau, v) \rd\tau}  K(x_m-vs,v,v_i) \cdot\\
		&\hspace{0.3cm}e^{-\int_0^{t_m-s} \sigma(x_m-vs-v_i\tau,v_i)\,  \rd\tau}\phi_x\left(\frac{x_m-vs-v_i(t_m-s)-x_i}{\eps}\right)\,\rd v\,\rd s \,.
	\end{aligned}
\end{equation}

The formula above could be further reduced if noticing the support condition for $\phi_x$. In particular, we denote the argument of $\phi_x$, $\frac{x_m-vs-v_i(t_m-s)-x_i}{\eps}$ to be $\mathfrak{a}$,  it is straightforward to see that
$$
\|\mathfrak{a}\| \geq \frac{\|x_m-x_o\| - s\|v-v_i\|}{\eps}\,,
$$
where we denote
\begin{equation}\label{eqn:x_o}
	x_o = x_i+v_it_m\,,
\end{equation}
the location of the particle at time $t_m$ assuming it does not tumble. For small but fixed $\eps>0$, this further gives:
\begin{itemize}
	\item When $s<c_1:=\frac{\|x_m-x_o\|}{4}$, since $\|v-v_i\|\leq 2$
	\begin{align}\label{eqn:time_restriction}
		\|\mathfrak{a}\| \geq  \frac{\|x_m-x_o\| - 2s}{\eps} =  \frac{\|x_m-x_o\|}{2\eps} > 1\,.
	\end{align}
	\item When $\<v_i, v\>>1-c_2:=1-\frac{\|x_m-x_o\|^2}{8t_m^2}$, since $\|v-v_i\|=\sqrt{2-2\<v_i,v\>}$
	\begin{align}\label{eqn:angle_restriction}
		\|\mathfrak{a}\| > \frac{\|x_m-x_o\| - s\sqrt{2\frac{\|x_m-x_o\|^2}{8t_m^2}}}{\eps}\geq \frac{\|x_m-x_o\| }{2\eps}>1\,.
	\end{align}
\end{itemize}
That means that the integrand in~\eqref{eqn:M_1_K} would be $0$ due to the finite support of $\phi_x$ in these two parts of the domain.

In this reduced domain, $U=\{(s,v)\in[c_1,t_m]\times \{v\in V\mid \<v,v_i\>\leq 1-c_2\}\}$, we define the function $\mathcal{S}: (s,v)\mapsto z:= s(v-v_i)$. We note this function is injective. For fixed $z$ in its image, we can calculate its inverse:
\begin{equation}\label{eq:ZetaOmega}
	\mathcal{S}^{-1}(z) = (\zeta, \omega)(z) = \left(\frac{|z|^2}{2|\<z,v_i\>|}, v_i + \frac{z}{\zeta(z)}\right)\,.
\end{equation}
So we conduct change of variable by letting $z=\mathcal{S}(s,v)$, we further reduce the domain of~\eqref{eqn:M_1_K} to $\mathcal{S}(U)$ {and use the definition \eqref{eq:Cshatvhat} of $C_{\hat{s},\hat{v}}$}
\begin{align*}
	&\lim_{\delta,\nu,\eta\to 0}M_{\psi}(\rho_1)\\
	&=
	\frac{1}{\eps^3} \int_{\mathcal{S}(U)}  e^{-\int_0^{\zeta(z)} \sigma(x_m-\tau \omega(z), \omega(z)) \rd\tau} K(x_m-\zeta(z)\omega(z),\omega(z),v_i)\cdot\\ &\hspace{1cm} e^{-\int_0^{t_m-\zeta(z)} \sigma(x_m-\zeta(z)\omega(z)-v_i\tau,v_i)\,  \rd\tau}\phi_x\left(\frac{x_m-z-x_o}{\eps}\right)\cdot\\ &\hspace{1cm}\frac{\hat{s}^2}{\zeta(z)^2}\frac{1-\< v_i,\hat{v}\>}{1-\<v_i,\omega(z)\>}\,\rd z\\
	&=\int_{\frac{a-\mathcal{S}(U)}{\eps}} e^{-\int_0^{\zeta(a-\eps\tilde{z})} \sigma(x_m-\tau \omega(a-\eps\tilde{z}), \omega(a-\eps\tilde{z})) \rd\tau}\\
	&\hspace{1cm}K(x_m-\zeta(a-\eps\tilde{z})\omega(a-\eps\tilde{z}),\omega(a-\eps\tilde{z}),v_i)\cdot\\
	& \hspace{1cm}e^{-\int_0^{t_m-\zeta(a-\eps\tilde{z})} \sigma(x_m-\zeta(a-\eps\tilde{z})\omega(a-\eps\tilde{z})-v_i\tau,v_i)\,  \rd\tau}\phi_x\left(\tilde{z}\right)\\
	&\hspace{1cm}\frac{\hat{s}^2}{\zeta(a-\eps\tilde{z})^2}\frac{1-\< v_i,\hat{v}\>}{1-\<v_i,\omega(a-\eps\tilde{z})\>}\,\rd\tilde{z}
\end{align*}
where we used the determinent of the Jacobian of $\mathcal{S}$ being $s^2 (1-\<v,v_i\>)$, and the substitution $\tilde{z}=\frac{a-z}{\eps}$ for $a:= x_m-x_o$ in the last step. For a visualization of the quantities, see Figure~\ref{fig:quantities}.  The fact that a small  ball  around $a$ with radius of order $\eps^{\alpha}$ is contained in $\mathcal{S}(U)$ for every $\eps$ ensure that $\frac{a-\mathcal{S}(U)}{\eps}$  will eventually contain the full support $B(0,1)$ of $\phi_x$ for small $\eps$, see  Fig~\ref{fig:Stransform}.  Together with the continuity of $K, \sigma,\zeta, \omega$, this allows the application of the dominated convergence theorem:
\begin{align*}
	\lim_{\delta,\nu,\eta\to 0}M_{\psi}(\rho_1)\xrightarrow[]{\eps\to 0}& K(x_i,\hat{v},v_i)\int_{B(0,1)}\phi_x(\tilde{z}) \, \rd\tilde{z} =K(x_i,\hat{v},v_i),
\end{align*}
where we used the form \eqref{eq:ZetaOmega} of $\zeta, \omega$ to see $\zeta(a-\eps \tilde{z}) /\hat{s}\to  1$ while $\omega(a-\eps\tilde{z}) \to \hat{v}$.
% by the dominated convergence theorem. It can be applied because of  the continuity of $K, \sigma,\zeta, \omega$ and the fact that a small  ball  around $a$ with radius of order $\eps^{\alpha}$ is contained in $U$ for every $\eps$ \kh{More details?}. Thus, $\frac{a-U}{\eps}$,  will eventually contain the full support $B(0,1)$ of $\phi_x$ for small $\eps$. 
% Additionally, the form \eqref{eq:ZetaOmega} of $\zeta, \omega$ shows $\zeta(a-\eps \tilde{z}) /\hat{s}\to  1$ while $\omega(a-\eps\tilde{z}) \to \hat{v}$.
\end{proof}
\begin{remark}
The proof for all three lemmas are local-in-time, in the sense that the measurement time is converging to $0$. This means that we can easily extend the result to deal with time-dependent $K$ as well. %Suppose $\sigma(x,t,v)$ has non-trivial dependence in $t$ that we would like to reconstruct, then both the initial data $\phi$ and the measurements $\psi$ should be prepared close to the $t$-horizon.
%compare remark 3.2; this one for $K$ reconstruction?Alternative to last sentence: 
Suppose $K(x,t,v,v')$ %is not constant in time and 
should be recovered for a specific $t$ value, then a new experiment is started at time $t$, meaning both the initial data $\phi$ and the measurements $\psi$ should be prepared at reference time $t$.
\end{remark}

% \begin{figure}[H]
% \centering
%     \includegraphics[width = .8\textwidth]{pics/plot_illustration}
% \caption{Geometry and quantities used in the proof. In the plot, $x_o = x_i + v_it_m$, the location of the particle assuming it does not tumble, see definition~\eqref{eqn:x_o}. The gray area is the ball of radius $\epsilon$ and the shaded area is $\mathcal{U}$ \kh{$x_i + vs$ with $(s,v)\in U$}. Note that $t_m$ is the length between $x_i$ and $x_o$. Since we set $t_m=\epsilon^\alpha$ with $\alpha<1$, then $t_m>\epsilon$. The choice of $U$ also ensures that $\|x_m-x_o\|>2\epsilon$, see~\eqref{eqn:time_restriction}-\eqref{eqn:angle_restriction}.}
% \label{fig:illustration}
% \end{figure}
\begin{figure}[H]
\centering
\includegraphics[height = 4cm]{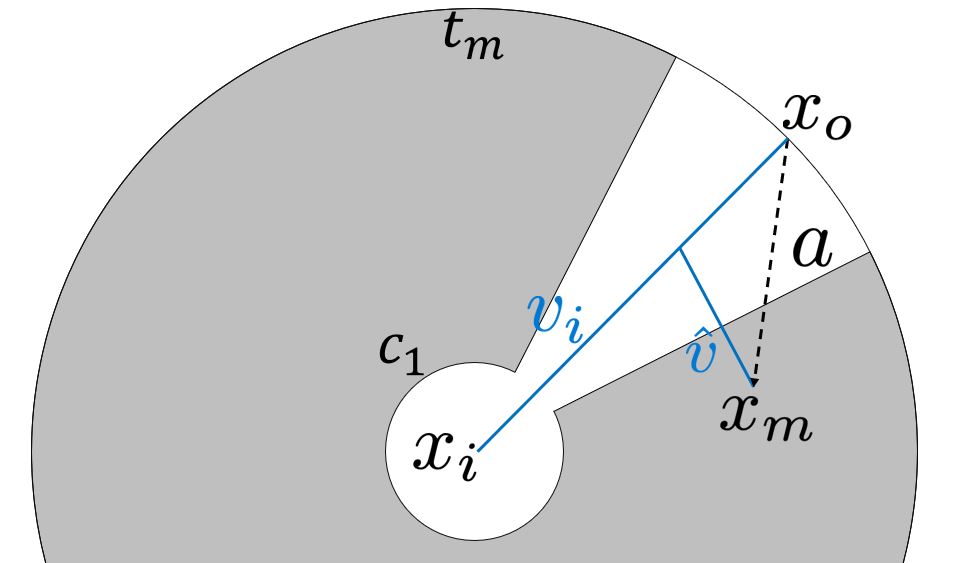}
\caption{Geometry and quantities used in the proof, displayed in 2D. In this figure, $x_o = x_i + v_it_m$, the location of the particle assuming it does not tumble, see definition~\eqref{eqn:x_o}. Note that $t_m$ is the length between $x_i$ and $x_o$. The gray area is $x_i+vs$ for $(s,v)\in U$. This is  the annulus $A$ in figure \ref{sfig:annulus} translated by $x_i$. The red point $\hat{v}\hat{s}$ is not depicted.}
\label{fig:quantities}
\end{figure}

% \begin{figure}[H]
%     \centering
%     \begin{subfigure}{0.45\textwidth}
	%         \centering
	%         \includegraphics[width = \textwidth]{pics/balla_preimage.eps}
	%         \caption{Sliced annulus $A$.}
	%         \label{sfig:annulus}
	%     \end{subfigure}~
%     \begin{subfigure}{0.45\textwidth}
	%         \centering
	%         \includegraphics[width = \textwidth]{pics/balla_image.eps}
	%         \caption{Image of $\mathcal{S}(U).$}
	%     \end{subfigure}
%     \caption{Perturbation of U by $\mathcal{S}$ in 2D. In a first step, $A:=\{vs\mid (v,s)\in U\}$ is displayed. The  image of $\mathcal{S}(U)$ is obtained by shifting each point in $vs\in A$ by $-v_is$. The green area marks a small ball cantered at the red dot $a$ and its preimage in $A$ around the preimage $\hat{v}\hat{s}$ of $a$. The radius can be chosen of order $t_m=\eps^{\alpha}$. }
%     \label{fig:Stransform_balla}
% \end{figure}

% Alternative to last figure: The caption is probably to long\ql{They are both good to me. The second one seems more intuitive to me.}

\begin{figure}[H]
\centering
\begin{subfigure}{0.45\textwidth}
	\centering
	\includegraphics[width = \textwidth]{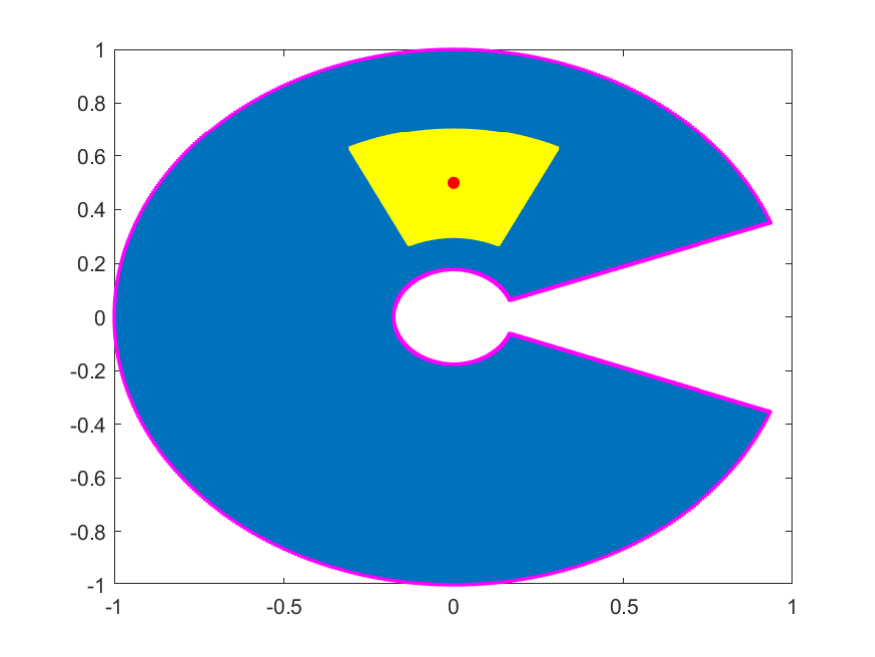}
	\caption{Sliced annulus $A$.}
	\label{sfig:annulus}
\end{subfigure}~
\begin{subfigure}{0.45\textwidth}
	\centering
	\includegraphics[width = \textwidth]{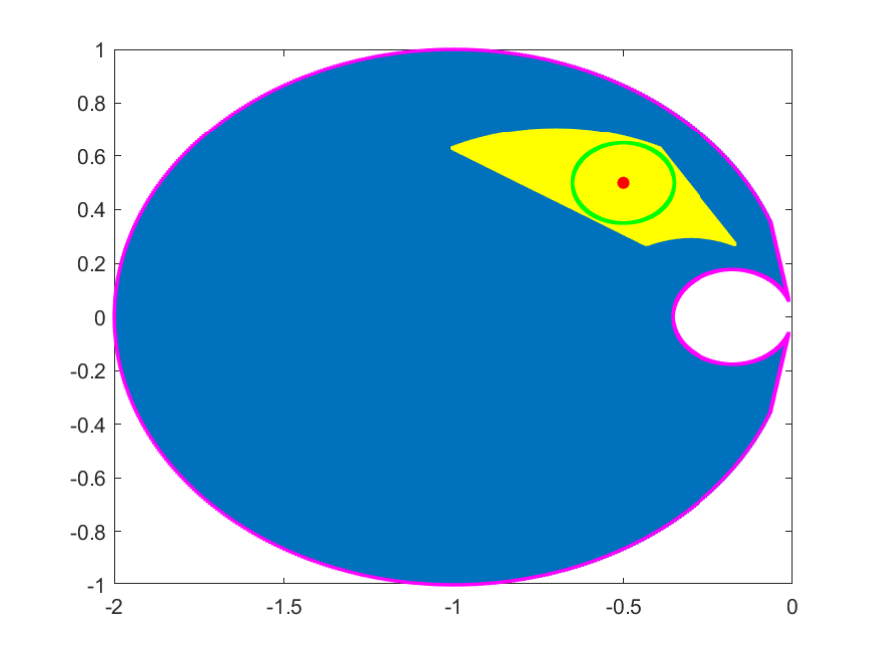}
	\caption{Image of $\mathcal{S}(U).$}
\end{subfigure}
\caption{Perturbation of U by $\mathcal{S}$ in 2D. In a first step, $A:=\{vs\mid (v,s)\in U\}$ is displayed. The red dot marks $\hat{v}\hat{s}$ which is bounded away from the boundaries of $A$ by construction. The yellow slice of an annulus is a neighbourhood of $\hat{v}\hat{s}$ that is bounded by the  arches of two circles. The image of $\mathcal{S}(U)$ is obtained by shifting each point in $vs\in A$ by $-v_is$. In this picture, the red dot is $a=\mathcal{S}(\hat{s},\hat{v})$. The image of the yellow area is bounded by the same arches of the circles, but the circles were shifted in direction $-v_i$ such that they touch $0$. One can choose the yellow slice of the annulus large enough such that a ball with radius  of order $ \eps^{\alpha}$ - whose boundary is depicted in green - is contained in the yellow image area.  }
\label{fig:Stransform}
\end{figure}

% \new{%So far we showed that $M_{\psi}(\rho_0)\to 0$ and $M_\psi(\rho_1)\to K(x_i,0,\hat{v},v_i)$.
% It remains to show $M_\psi(\rho_{\geq 2})\to 0$. The proof is a direct consequence of lemma \ref{lem:estMfgeqN} and an adequate choice of $\alpha$ with $t_m=\eps^{\alpha}$.}

%--------------------------------------------------------
\newpage\newpage

\section{Conclusions}
\label{sec:conclusions}

In this paper we work on a classical kinetic chemotaxis model, and give a rigorous proof for using density measurement to reconstruct tumbling and damping coefficient. As stated in the introduction, chemotaxis is a heavily studied research area and there are abundant models. What we consider in~\eqref{eq:chemotaxis} is one of the most classical ones that was derived from the study of biased random walks~\cite{AltChemotaxis}. We show that when given a special design of initial data, the population density, one specific macroscopic quantity as a function of time, contains sufficient information to recover the microscopic quantities, such as velocity tumbling kernel and its associated damping coefficient.

This leaves various directions  unexplored, of which we list a few here:
\begin{itemize}
\item{Result-wise:} In the current paper the stability  of the reconstruction is not addressed. Indeed, Lemma~\ref{lem:f1} suggests that the reconstruction of $\sigma$ requires differentiating the data. We thus expect bad stability of this reconstruction in $L_\infty$ norm if the data is also assumed to be in $C$ in time. To obtain good stability, a proper norm  higher than $C^1$ needs to be selected. As it is not trivial, how the details are involved, we leave the discussion for stability for future work.
\item{Model wise:} we only showcase unique reconstruction for a very specific setting. More complicated models, of which some are listed below,  are not yet considered.
\begin{itemize}
	\item In our model, the space distribution of stimuli is fixed. This can be viewed as  a simplification, as in practice, bacteria interact with the environment, and may consume or produce the stimuli, for instance in self-attraction or self-repulsion phenomena. This leads to   changes in the chemical concentration which can in turn  generate   interesting patterns, see \cite{Perthame_PatternFormationKineticChemotaxis_2018}  or \cite{Painter_PatternFormation} and references therein. On the mathematical side, this case is treated by  a coupled system of  the chemotaxis equation \eqref{eq:chemotaxis} and an elliptic or parabolic equation for the chemical signal~\cite{Bournaveas_BlowUpKineticChemotaxis,BournaveasPoissonModel,chalub2004kinetic, chalub2006PreventOvercrowdingKineticChemotaxis}. However, it is practically almost impossible to trace the bacteria trajectory and measure the time dynamics of chemical concentration simultaneously. This prevents the quantification of most chemotaxis models except for some tightly controlled case  \cite{ChemotaxisRecoverDGamma,giometto2015generalized,ChemotaxisMeasureNoChemoattractant} 
	or well studied species \cite{Li_BarrierCrossingEcoliDetection, Si2012InternalVar}.  The study of the potential to extend thus presented techniques to the above mentioned more complicated settings could lead to interesting and practically relevant results. 
	\item Literature also provides more sophisticated kinetic chemotaxis models, for example, models that incorporate birth/death processes~\cite{OthmerAltDispersalModels,OthmerHillen2Chemo}, the tumbling time~\cite{Schmeiser_noninstantaneousTumbling_2022}, or the adaptation process with internal variables~\cite{ErbanOthmerKineticChemotaxis,MinTang2014InternalVar,XueOthmer_kineticModelInternalVar_2009,TangXue_FractionalDiffusionLimit_2021}.
	It would be interesting to investigate whether macroscopic quantities can provide enough information to recover the microscopic parameters for these more sophisticated kinetic models. 
\end{itemize}
\end{itemize}

Despite its obvious significance, inverse problem in math bio is still at its infancy. Many related problems are left unaddressed. In the framework of kinetic formulation for bacteria-motion, singular decomposition technique has demonstrated its flexibility and is very compatible with the kinetic formulation in the inverse problem setting. We expect to further investigate various problems listed above along this direction.

\appendix
\section*{Acknowledgments}
We would like to thank Marlies Pirner for the inspiring discussions that influenced the design of Lemma \ref{lem:estMfgeqN}. We also thank Beno\^it Perthame for insightful comments regarding this work.

%%%%%%%%%%%%%%%%%%%%%%%%%%%%%%%%%%%%%%%%%%

%%%%%%%%%%%%%%%%%%%%%%%%%%%%%%%%%%%%%%%%%%

%%%%%%%%%%%%%%%%%%%%%%%%%%%%%%%%%%%%%%%%%%
\vspace{6pt} 

%%%%%%%%%%%%%%%%%%%%%%%%%%%%%%%%%%%%%%%%%%
%% optional
%\supplementary{The following are available online at \linksupplementary{s1}, Figure S1: title, Table S1: title, Video S1: title.}

% Only for the journal Methods and Protocols:
% If you wish to submit a video article, please do so with any other supplementary material.
% \supplementary{The following are available at \linksupplementary{s1}, Figure S1: title, Table S1: title, Video S1: title. A supporting video article is available at doi: link.} 

%%%%%%%%%%%%%%%%%%%%%%%%%%%%%%%%%%%%%%%%%%
\subsection*{Funding}
K.H. acknowledges support by the \emph{W\"urzburg Mathematics Center for Communication and Interaction} (WMCCI) as well as the  \emph{German Academic Scholarship Foundatio} (Studienstiftung des deutschen Volkes) and the \emph{Marianne-Plehn-Program}. The work of Q.L. is supported in part by NSF-DMS-1750488 and ONR-N00014-21-1-2140, and Vilas Young Investigator award. M. Tang is supported by NSFC 11871340, NSFC12031013.
%
%\conflictsofinterest{The authors declare no conflict of interest.} 

%\bibliographystyle{abbrv}
%\bibliography{lit.bib}

\end{document}